\numberwithin{equation}{section}
\numberwithin{figure}{section}
\newcommand{\xyR}[1]{%
\makeatletter
\xydef@\xymatrixrowsep@{#1}
\makeatother
} 
\newcommand{\xyC}[1]{%
\makeatletter
\xydef@\xymatrixcolsep@{#1}
\makeatother
} 
\begin{document}
\title{Saturated Kripke Structures as Vietoris Coalgebras}
\author{H. Peter Gumm, Mona Taheri}
\institute{Fachbereich Mathematik, Philipps-Universität Marburg}
\maketitle
\begin{abstract}
We show that the category of coalgebras for the compact Vietoris endofunctor
$\mathbb{V}$ on the category $Top$ of topological spaces and continuous
mappings is isomorphic to the category of all modally saturated Kripke
structures. Extending a result of Bezhanishvili, Fontaine and Venema
\cite{BezhFonVen10}, we also show that Vietoris subcoalgebras as
well as bisimulations admit topological closure and that the category
of Vietoris coalgebras has a terminal object.
\end{abstract}

\section{Introduction}

The theory of coalgebras has provided Computer Science with a much
needed general framework for dealing with all sorts of state based
systems, with their structure theories and their logics. The varied
types of systems, be they deterministic or nondeterministic automata,
transition systems, probabilistic or weighted systems, neighborhood
systems or the like, are fixed by the choice of an appropriate endofunctor
$F$ on the category of sets. From there on, with hardly any further
assumptions, a mathematically pleasing structure theory and corresponding
modal logics can be developed, see e.g. \cite{Rutten2000},\cite{Gum99b},\cite{Ihr03}.

A particularly well behaved situation arises when choosing for $F$
the finite-powerset functor $\mathbb{P}_{\omega}(-)$, perhaps augmented
with a constant component $\mathbb{P}(\Phi)$ representing sets of
atomic formulas. Coalgebras for the functor $\mathbb{P}_{\omega}(-)\times\mathbb{P}(\Phi)$
are precisely all image finite Kripke structures. Their logic is the
standard modal logic based on the atomic formulae in $\Phi$, and
they possess a terminal coalgebra $T$, even though its description
is always of an ``indirect'' nature (see \cite{Bar93,Bar94,GS2002}). 

The well known Hennessy-Milner theorem\cite{HM}, relating bisimulations
and logical equivalence is a consequence of image finiteness and will
not continue to hold for arbitrary Kripke structures, i.e. for coalgebras
of type $\mathbb{P}(-)\times\mathbb{P}(\Phi)$, see \cite{Hollenberg}. 

The theory of modal logic knows of a class of Kripke structures, which
lies between image finite structures and arbitrary Kripke structures
and which continues to enjoy the Hennessy-Milner theorem. These structures
are called \emph{modally saturated}, or simply $m$-saturated\cite{GorankoOtto}.
Unfortunately, though, there seems to be no $Set$-functor $F$, somehow
located in between $\mathbb{P}_{\omega}(-)\times\mathbb{P}(\Phi)$
and $\mathbb{P}(-)\times\mathbb{P}(\Phi)$, whose coalgebras would
be just the $m$-saturated Kripke structures.

It is well known, that much of the theory of coalgebras can be generalized
by turning to other categories than $Set$, provided they are co-complete
and come with a reasonable factorization structure. Some of the examples
studied in the literature replace the base category $Set$ with the
category $Rel$ of sets and relations \cite{jacobs_2016}, with the
category $Pos$ of posets \cite{BalanKurz} or $Cpo$ of complete
partial orders, with the category \emph{Meas} of measurable spaces
\cite{Doberkat}\cite{Moss2006}, or the category $Stone$ of Stone
spaces. Relevant to this present work will be the works of Kupke,
Kurz and Venema\cite{KupkeKurzVenema} as well as Bezhanishvili, Fontaine
and Venema \cite{BezhFonVen10} regarding coalgebras for the Vietoris
functor on the category of Stone spaces, i.e. compact zero-dimensional
Hausdorff spaces with continuous mappings.

When extending the Vietoris functor from Stone spaces to arbitrary
topological spaces $\mathcal{X}$, two natural choices offer themselves
for the object map: either the collection of all closed subsets of
$\mathcal{X}$ or the collection of all compact subsets of $\mathcal{X}$,
both equipped with appropriate topologies. Each of these choices yields
a functor, generalizing the mentioned Vietoris functor on Stone spaces.
Named the \emph{lower Vietoris functor}, resp. the \emph{compact Vietoris
functor}, these endofunctors on the category $Top$ of topological
spaces with continuous functions were explored in recent work by Hofmann,
Neves and Nora\cite{HofmannNevesNora}. 

For our investigation of saturated Kripke structures, the \emph{compact
Vietoris functor}, which we denote by $\mathbb{V}(-),$ turns out
to be appropriate. To model saturated Kripke-Structures, we choose
the endofunctor $\mathbb{V}(-)\times\mathbb{P}(\Phi)$ on the category
$Top$ of topological spaces and continuous mappings, where the $\mathbb{P}(\Phi)$-part
is a constant component equipped with an appropriate topology, intuitively
representing a set of atomic propositions, as above. We show that
$\mathbb{V}(-)\times\mathbb{P}(\Phi)$ coalgebras precisely correspond
to $m$-saturated Kripke structures, in fact there is an isomorphism
of categories between the category of saturated Kripke structures
and the category of all topological coalgebras for the compact Vietoris
functor $\mathbb{V}(-)\times\mathbb{P}(\Phi)$. 

This correspondence also yields a direct description of the terminal
$\mathbb{V}(-)\times\mathbb{P}(\Phi)$ coalgebra, which seems to be
simpler and more natural than the terminal $\mathbb{P}_{\omega}(-)\times\mathbb{P}(\Phi)$
coalgebra mentioned above: it is simply the Vietoris coalgebra corresponding
to the canonical model of normal modal logic over $\Phi.$

For Stone coalgebras we know from \cite{BezhFonVen10}, that the topological
closure $\bar{R}$ of a bisimulation $R$ is itself a bisimulation,
again. We verify that their arguments carry over to the more general
case of arbitrary Vietoris coalgebras, and we show also that a corresponding
result holds true for subcoalgebras in place of bisimulations. For
this we need to prepare some topological tools which may be interesting
in their own right, relating convergence in the Vietoris space $\mathbb{V}(\mathcal{X})$
to convergence in the base space $\mathcal{X}$. In particular, topological
nets $(\kappa_{i})_{i\in I}$ converging to $\kappa$ in the Vietoris
space $\mathbb{V}(\mathcal{X})$ are shown to correspond, up to subnet
formation, to nets $(a_{i})_{i\in I}$ with $a_{i}\in\kappa_{i}$,
converging in the base space $\mathcal{X}$ to $a\in\kappa$, and
conversely. 

\section{Preliminaries}

For the remainder of this article, we shall fix a set $\Phi,$ the
elements of which shall be called \emph{propositional variables} or
\emph{atomic propositions}.

\subsection{Kripke structures }
\begin{definition}
A \emph{Kripke structure (}also called \emph{Kripke model)} \textup{$\mathcal{X}=(X,R,v)$}
consists of a set $X$ of states together with a relation $R\subseteq X\times X$,
and a map $v:X\to\mathbb{P}(\Phi),$ where $\mathbb{P}$ denotes the
powerset functor.
\end{definition}

In applications, $X$ will typically be a set of possible states of
a system, $R$ is called the\emph{ transition relation}, describing
the allowed transitions between states from $X,$ and $v$ is called
the \emph{valuation}, since $v(x)$ consists of all atomic propositions
true in state $x.$ Instead of $(x,y)\in R$ we write $x\rightarrowtriangle y$
(or $x\rightarrowtriangle_{R}y$, if necessary). The idea is that
$x\rightarrowtriangle y$ expresses that it is possible for the system
to move from state $x$ to state $y$. Instead of a relation, we can
alternatively consider $R$ as a map $R:X\to\mathbb{P}(X)$. This
justifies the notation 
\[
R(x):=\{y\in X\mid(x,y)\in R\},
\]
so $R(x)$ denotes the \emph{successors} of $x,$ i.e. all states
reachable from $x$ in one step. 
\begin{definition}
For subsets $U\subseteq X$ define $\langle R\rangle U:=\{x\in X\mid\exists y\in U.(x,y)\in R\}$
and $[R]U:=\{x\in X\mid\forall y\in X.(x,y)\in R\implies y\in U\}.$
\end{definition}

Thus $x\in\langle R\rangle U$ if from $x$ it is\emph{ possible}
to reach an element of $U$ in one step, and $x\in[R]U$ says that
starting from $x,$ each transition will \emph{necessarily} take us
to $U$. Obviously, $\langle R\rangle(X-U)=X-[R]U$, and $[R](X-U)=X-\langle R\rangle U$,
so $\langle R\rangle$ and $[R]$ are mutually expressible if complements
are available.

\subsection{Modal logic}

Starting with the elements of $\Phi$\emph{ }as\emph{ atomic formulae,
}we obtain \emph{modal formulae} by combining them with the standard
boolean connectors $\wedge,$ $\vee,\neg$ or prefixing with the unary
modal operator $\square.$ We also allow the usual shorthands $\bigvee_{i\in I_{0}}\phi_{i}$
and $\bigwedge_{i\in I_{0}}\phi_{i}$, whenever $I_{0}$ is a finite
indexing set and each $\phi_{i}$ is a formula. Let $\mathcal{L}_{\Phi}$
be the set of all modal formulae so definable. 

\emph{Validity} $x\Vdash\phi,$ is defined for $x\in X$ and $\phi\in\mathcal{L}_{\Phi}$
in the usual way (see \cite{blackburn_rijke_venema_2001}):

\begin{eqnarray*}
x\Vdash p & :\iff & p\in v(x)\text{, \,\,\,\,\,whenever \ensuremath{p\in\Phi}}\\
x\models\square\phi & :\iff & \forall y\in X.\,(x\rightarrowtriangle y\implies y\models\phi).
\end{eqnarray*}
 For the boolean connectives $\wedge,\vee,\neg$, validity is defined
as expected. We extend it to sets of formulas $\Sigma\subseteq\mathcal{L}_{\Phi}$,
by 
\[
x\Vdash\Sigma\,\,:\iff\,\,\,\forall\phi\in\Sigma.\,x\Vdash\phi.\,\,\,\,\,\,
\]
For any $x\in X$ we put $\llbracket x\rrbracket:=\{\phi\in\mathcal{L}_{\Phi}\mid x\Vdash\phi\}$
and, similarly, for any $\phi\in\mathcal{L}_{\Phi}$~ we set $\llbracket\phi\rrbracket:=\{x\in X\mid x\Vdash\phi\}.$
Two elements $x,y$ from (possibly different) Kripke structures are
called \emph{logically equivalent} (in symbols $x\approx y$), if
for each formula $\phi\in\mathcal{L}_{\Phi}$ we have $x\models\phi\iff y\models\phi.$
Restricted to a single Kripke structure, $\approx$ is the kernel
of the \emph{semantic map }$x\mapsto\llbracket x\rrbracket$, and
hence an equivalence relation. Similarly, two modal formulae $\phi,\psi$
are \emph{equivalent}, and we write $\phi\equiv\psi,$ if for each
element $x$ in any Kripke structure we have $x\Vdash\phi\iff x\Vdash\psi.$ 

Adding a further modality $\diamondsuit$ to our logical language
by defining $\diamondsuit\phi:=\neg\square\neg\phi$ provides more
than only a convenient abbreviation. The resulting equivalences $\neg\square\phi\equiv\diamondsuit\neg\phi$
and $\neg\diamondsuit\phi\equiv\square\neg\phi$ allow one to push
negations inside, just as the deMorgan laws permit to do so for $\vee$
and $\wedge$, so that each modal formula becomes equivalent to a
modal formula in \emph{negation normal form} (\emph{nnf}), where negations
may only occur only in front of an atomic formula. We state this here
for later reference: 
\begin{lemma}
Every modal formula is equivalent to a modal formula in negation normal
form (nnf).
\end{lemma}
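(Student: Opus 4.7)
The plan is a straightforward structural induction on modal formulas, but with a slight twist: one cannot simply recurse on $\neg\phi$ as a subformula, so the cleanest approach is to define, by simultaneous recursion on the structure of $\phi$, two formulas $\mathrm{nnf}(\phi)$ and $\mathrm{nnf}(\neg\phi)$, both in negation normal form, and then prove $\phi\equiv\mathrm{nnf}(\phi)$ and $\neg\phi\equiv\mathrm{nnf}(\neg\phi)$ by induction on $\phi$.

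First I would set up the recursive definition. For an atomic $p\in\Phi$, put $\mathrm{nnf}(p):=p$ and $\mathrm{nnf}(\neg p):=\neg p$. For the Boolean cases, set $\mathrm{nnf}(\phi\wedge\psi):=\mathrm{nnf}(\phi)\wedge\mathrm{nnf}(\psi)$ and $\mathrm{nnf}(\neg(\phi\wedge\psi)):=\mathrm{nnf}(\neg\phi)\vee\mathrm{nnf}(\neg\psi)$, and symmetrically for $\vee$. For the modal cases, set $\mathrm{nnf}(\square\phi):=\square\,\mathrm{nnf}(\phi)$ and $\mathrm{nnf}(\neg\square\phi):=\diamondsuit\,\mathrm{nnf}(\neg\phi)$, and analogously once $\diamondsuit$ is admitted into the language (using $\mathrm{nnf}(\neg\diamondsuit\phi):=\square\,\mathrm{nnf}(\neg\phi)$). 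Finally, for double negation, $\mathrm{nnf}(\neg\neg\phi):=\mathrm{nnf}(\phi)$. By construction, every resulting formula has $\neg$ only in front of atomic propositions, i.e., is in nnf.

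Next I would verify the two equivalences $\phi\equiv\mathrm{nnf}(\phi)$ and $\neg\phi\equiv\mathrm{nnf}(\neg\phi)$ simultaneously by induction on the structure of $\phi$. The Boolean cases use the deMorgan laws and the induction hypothesis applied to the immediate subformulas; the modal cases use precisely the equivalences $\neg\square\phi\equiv\diamondsuit\neg\phi$ and $\neg\diamondsuit\phi\equiv\square\neg\phi$ noted just before the lemma, together with the fact that $\square$ and $\diamondsuit$ respect logical equivalence of their arguments (immediate from the semantic clauses). The double negation case reduces at once to the induction hypothesis.

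The only mild obstacle is ensuring that the recursion is well-founded and that the simultaneous induction is set up correctly: recursion is on the syntactic complexity of $\phi$ alone, not on $\neg\phi$, which keeps both clauses $\mathrm{nnf}(\phi)$ and $\mathrm{nnf}(\neg\phi)$ appealing to strictly smaller $\phi$ at each step. Once this is in place, the equivalence proof is a routine verification case-by-case, and the statement of the lemma follows.
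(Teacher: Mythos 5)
Your proposal is correct and follows essentially the same route as the paper, which proves the lemma only informally in the preceding paragraph by pushing negations inward via the deMorgan laws and the dualities $\neg\square\phi\equiv\diamondsuit\neg\phi$ and $\neg\diamondsuit\phi\equiv\square\neg\phi$. Your simultaneous recursion on $\mathrm{nnf}(\phi)$ and $\mathrm{nnf}(\neg\phi)$ is just a careful formalization of that same argument, with the well-foundedness point correctly handled.
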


\subsection{Bisimulations}
\begin{definition}
\label{def:bisimulation}A \emph{bisimulation} between two Kripke
structures $\mathcal{X}_{1}=(X_{1},R_{1},v_{1})$ and $\mathcal{X}_{2}=(X_{2},R_{2},v_{2})$
is a relation $B\subseteqq X_{1}\times X_{2}$ such that for each
$(x,y)\in B$:
\begin{enumerate}
\item $v_{1}(x)=v_{2}(y)$,
\item $\forall x'\in X_{1}.\,x\rightarrowtriangle_{R_{1}}x'\implies\exists y'\in X_{2}.\,y\rightarrowtriangle_{R_{2}}y'\wedge\,x'B\,y'$,
\item $\forall y'\in X_{2}.\,y\rightarrowtriangle_{R_{2}}y'\implies\exists x'\in X_{1}.\,x\rightarrowtriangle_{R_{1}}x'\wedge\,x'B\,y'.$
\end{enumerate}
\end{definition}

The empty relation $\emptyset\subseteq X_{1}\times X_{2}$ is clearly
a bisimulation, and the union of a family of bisimulations between
$\mathcal{X}_{1}$ and $\mathcal{X}_{2}$ is again a bisimulation,
hence there is a largest bisimulation between $\mathcal{X}_{1}$ and
$\mathcal{X}_{2}$, which we call $\sim_{\mathcal{X}_{1},\mathcal{X}_{2}}$
or simply $\sim$, when $\mathcal{X}_{1}$ and $\mathcal{X}_{2}$
are clear from the context.

If $B_{1}\subseteq X_{1}\times X_{2}$ is a bisimulation between $\mathcal{X}_{1}$
and $\mathcal{X}_{2}$, then the converse relation $B_{1}^{-1}\subseteq X_{2}\times X_{1}$
is a bisimulation between $\mathcal{X}_{2}$ and $\mathcal{X}_{1}$.
Given another bisimulation $B_{2}$ between Kripke structures $\mathcal{X}_{2}$
and $\mathcal{X}_{3}$ then the relational composition $B_{1}\circ B_{2}$
is a bisimulation between $\mathcal{X}_{1}$ and $\mathcal{X}_{3}$.

A bisimulation \emph{on} a Kripke structure $\mathcal{X}=(X,R,v)$
is a bisimulation between $\mathcal{X}$ and itself. The identity
$\Delta_{X}=\{(x,x)\mid x\in X\}$ is always a bisimulation on $\mathcal{X}$.
Consequently, the largest bisimulation \emph{on} $\mathcal{X}$ is
an equivalence relation, denoted by $\sim_{\mathcal{X}}$ or simply
$\sim$. We say that two points $x\in X_{1}$ and $y\in X_{2}$ are
\emph{bisimilar,} if there exists a bisimulation $B$ with $x\,B\,y$,
which is the same as saying $x\sim y$. It is well known and easy
to check by induction: 
\begin{lemma}
\label{lem:Bisimilar_implies_logical_equivalence}Bisimilar points
satisfy the same formulae $\phi\in\mathcal{L}_{\Phi}$. 
\end{lemma}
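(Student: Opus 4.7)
The plan is to prove the statement by structural induction on the modal formula $\phi \in \mathcal{L}_\Phi$. Fix a bisimulation $B \subseteq X_1 \times X_2$ between $\mathcal{X}_1 = (X_1,R_1,v_1)$ and $\mathcal{X}_2 = (X_2,R_2,v_2)$, and let $(x,y) \in B$. I will show $x \Vdash \phi \iff y \Vdash \phi$ for every $\phi$.

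For the base case, take $\phi = p \in \Phi$. Then $x \Vdash p$ iff $p \in v_1(x)$, and by clause~1 of Definition~\ref{def:bisimulation} we have $v_1(x) = v_2(y)$, so $p \in v_1(x)$ iff $p \in v_2(y)$ iff $y \Vdash p$. The boolean cases $\neg \psi$, $\psi_1 \wedge \psi_2$, $\psi_1 \vee \psi_2$ (and their finitary iterations) are handled by an entirely routine appeal to the induction hypothesis, since validity of these connectives is defined pointwise at a single state.

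The one case requiring the bisimulation conditions proper is $\phi = \square \psi$. Assume $x \Vdash \square \psi$; I want $y \Vdash \square \psi$. Pick any $y' \in X_2$ with $y \rightarrowtriangle_{R_2} y'$. By clause~3 of Definition~\ref{def:bisimulation} there exists $x' \in X_1$ with $x \rightarrowtriangle_{R_1} x'$ and $(x',y') \in B$. Since $x \Vdash \square \psi$, we have $x' \Vdash \psi$, and the induction hypothesis applied to $(x',y') \in B$ yields $y' \Vdash \psi$. As $y'$ was arbitrary, $y \Vdash \square \psi$. The converse direction is symmetric, using clause~2 of the bisimulation definition instead of clause~3.

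There is no real obstacle here; the argument is the standard Hennessy--Milner style induction, and the only point to emphasise is the symmetric use of the two zig-zag conditions in the modal step. Note that the language $\mathcal{L}_\Phi$ as defined in the excerpt contains $\wedge, \vee, \neg, \square$ explicitly, so the derived modality $\diamondsuit$ need not be treated separately; alternatively, one could invoke the preceding lemma and argue only for formulae in negation normal form, but the direct induction is just as short.
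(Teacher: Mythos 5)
Your proof is correct and follows exactly the argument the paper has in mind: the paper gives no written proof, merely noting the lemma is ``well known and easy to check by induction,'' and your structural induction (clause~1 of Definition~\ref{def:bisimulation} for atomic formulae, routine boolean cases, and the symmetric use of the zig-zag clauses~2 and~3 for $\square$) is that standard induction carried out in full. No issues.
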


A converse to this lemma was shown by Hennessy and Milner for the
case of \emph{image finite} Kripke structures. Here, an element $x$
in a Kripke structure $\mathcal{X}$ is called \emph{image finite}
if it has only finitely many successors, i.e. $\{x'\mid x\rightarrowtriangle x'\}$
is finite. $\mathcal{X}$ is called image finite if each $x$ from
$\mathcal{X}$ is image finite. Thus Hennessy and Milner proved in
\cite{HM}: 
\begin{proposition}
If $x\in\mathcal{X}$ and $y\in\mathcal{Y}$ are image finite elements,
then $x\sim y$ iff $x\approx y.$
\end{proposition}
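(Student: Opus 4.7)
The plan is to split into the two implications. The direction $x \sim y \Rightarrow x \approx y$ is already Lemma \ref{lem:Bisimilar_implies_logical_equivalence}, so all the work is in proving $x \approx y \Rightarrow x \sim y$ under image finiteness. The strategy I would use is the classical one: show that the relation $\approx$, regarded as a relation between elements of $\mathcal{X}$ and $\mathcal{Y}$, is itself a bisimulation; since $\sim$ is the largest bisimulation, $x \approx y$ will then force $x \sim y$.

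To verify Definition \ref{def:bisimulation} for $\approx$, the valuation clause is immediate, since $p \in v_1(x) \iff x \Vdash p \iff y \Vdash p \iff p \in v_2(y)$ for every atomic $p \in \Phi$. The heart of the argument is the forth clause. Assuming $x \approx y$ and $x \rightarrowtriangle x'$, I need to produce $y' \in R_2(y)$ with $x' \approx y'$. I would argue by contradiction: if no such $y'$ existed, then for every $y' \in R_2(y)$ I could select a formula $\phi_{y'} \in \mathcal{L}_\Phi$ separating $x'$ from $y'$ and, after negating if necessary, arrange $x' \Vdash \phi_{y'}$ while $y' \not\Vdash \phi_{y'}$. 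Because $y$ is image finite, $R_2(y)$ is finite, so $\phi := \bigwedge_{y' \in R_2(y)} \phi_{y'}$ is a legitimate modal formula. Then $x \Vdash \diamondsuit \phi$ (witnessed by $x'$), whereas no successor of $y$ satisfies $\phi$, so $y \not\Vdash \diamondsuit \phi$, contradicting $x \approx y$. The back clause follows by the symmetric argument, using image finiteness of $x$.

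The main obstacle, and the only point where image finiteness is genuinely used, is exactly the formation of the finite conjunction $\phi$: if $R_2(y)$ were infinite, only an infinite conjunction would separate $x'$ from every $y' \in R_2(y)$, and no such formula is available in $\mathcal{L}_\Phi$. Strictly speaking, for $\approx$ to qualify as a bisimulation one must be able to rerun the forth/back argument at every $\approx$-related pair, not just at $(x,y)$; this requires image finiteness at every state of $\mathcal{X}$ and $\mathcal{Y}$ reachable from $x$ or $y$, which is the usual ambient assumption behind the informal phrase \emph{image finite elements}.
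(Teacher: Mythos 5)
Your argument is correct, and it is the classical Hennessy--Milner proof: the direction $x\sim y\Rightarrow x\approx y$ is Lemma \ref{lem:Bisimilar_implies_logical_equivalence}, and for the converse you show that logical equivalence is itself a bisimulation, using image finiteness to turn the family of separating formulae $\phi_{y'}$ into a single finite conjunction and derive a contradiction with $x\approx y$ via $\diamondsuit\phi$. Note that the paper itself gives no proof of this proposition; it is quoted as a known result of Hennessy and Milner \cite{HM}, so there is nothing in the text to compare against beyond confirming that yours is the standard argument. The one genuinely delicate point is the one you flag in your last sentence, and you are right to insist on it: with image finiteness assumed only at $x$ and $y$ themselves the statement is literally false. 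For instance, let $r$ and $r'$ be the usual logically equivalent but non-bisimilar pair (a root whose successors begin finite chains of every length, versus the same with an additional infinite chain), and prepend single points $x\rightarrowtriangle r$ and $y\rightarrowtriangle r'$; then $x$ and $y$ are image finite and $x\approx y$, but $x\not\sim y$. The intended reading, consistent with the sentence preceding the proposition (``for the case of image finite Kripke structures''), is that all states reachable from $x$ and $y$ are image finite; restricting $\approx$ to the substructures generated by $x$ and $y$, your forth/back argument then applies at every relevant pair, and since a bisimulation between these generated substructures is also one between $\mathcal{X}$ and $\mathcal{Y}$, the conclusion $x\sim y$ follows because $\sim$ is the largest bisimulation.
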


\subsection{Homomorphisms and congruences}
\begin{definition}
\label{def:Homomorphism}A homomorphism $\varphi:\mathcal{X}\to\mathcal{Y}$
between Kripke structures $\mathcal{X}=(X,R_{\mathcal{X}},v_{\mathcal{X}})$
and $\mathcal{Y}=(Y,R_{\mathcal{Y}},v_{\mathcal{Y}})$ is a map whose
graph 
\[
G(\varphi):=\{(x,\varphi(x))\mid x\in X\}
\]
 is a bisimulation.\footnote{In the literature on Modal Logic (see e.g. \cite{blackburn_rijke_venema_2001},\cite{GorankoOtto}),
homomorphisms are usually called ``bounded morphisms''.} 

We call $\mathcal{X}$ a \emph{homomorphic preimage} of $\mathcal{Y}$,
and if $\varphi$ is surjective (which we indicate by writing $\varphi:\mathcal{X}\twoheadrightarrow\mathcal{Y}$)
then we call $\mathcal{Y}$ a \emph{homomorphic image} of $\mathcal{X}$.
If $X\subseteq Y$and the inclusion map $\iota:\mathcal{X}\to\mathcal{Y}$
is a homomorphism, then $\mathcal{X}$ is called a \emph{Kripke substructure}
of $\mathcal{Y}$. 
\end{definition}

It is easy to check that a subset $X\subseteq Y$ with the restrictions
of $R_{\mathcal{Y}}$ and $v_{\mathcal{Y}}$ to $X$ is a substructure
of $\mathcal{Y}$ if only if $R_{\mathcal{X}}(x)\subseteq X$ for
each $x\in X.$ If $\varphi:\mathcal{X}\to\mathcal{Y}$ is a homomorphism,
then its kernel 
\[
\ker\varphi:=\{(x,x')\in X\mid\varphi(x)=\varphi(x')\}
\]
is called a\emph{ congruence relation}. This is clearly an equivalence
relation and a bisimulation as well, since we can express it as a
relation product of $G(\varphi)$, the graph of $\varphi$, with its
converse $G(\varphi)^{-1}$ as 
\[
\ker\varphi=G(\varphi)\circ G(\varphi)^{-1}.
\]

\section{Saturated structures}

The notion of \emph{saturation} goes back to a similar concept of
Fine in \cite{Fine}. The terminology $m$\emph{-saturation }(or \emph{modal
saturation}) was adopted by \cite{blackburn_rijke_venema_2001} and
\cite{GorankoOtto}: 
\begin{definition}
An element $x$ is \emph{$m$-saturated}, if for each set $\Sigma$
of formulas, such that each finite subset $\Sigma_{0}\subseteq\Sigma$
is satisfied at some successor $y_{0}$ of $x,$ there is a successor
$y$ of $x$ satisfying all formulas in $\Sigma.$ A Kripke structure
is called \emph{$m$-saturated}, if each of its elements is saturated.
\end{definition}

In the following we shall find it convenient to informally use infinitary
disjunctions $\bigvee_{i\in I}\phi_{i}$ \textendash{} not as as a
logical expressions but as shorthands. In particular we write 
\[
x\Vdash\square\bigvee_{i\in I}\phi_{i}
\]
as an abbreviation for
\[
\forall y.(x\rightarrowtriangle y\implies\exists i\in I.\,y\models\phi_{i}).
\]

With this shorthand, the above definition can be reformulated: 
\begin{lemma}
An element $x$ in a Kripke model $\mathcal{X}=(X,R,v)$ is \emph{$m$}-saturated,
if for each family $(\phi_{i})_{i\in I}$ such that $x\Vdash\square\bigvee_{i\in I}\phi_{i}$
there exists a finite subset $I_{0}\subseteq I$ with $x\Vdash\square\bigvee_{i\in I_{0}}\phi_{i}$. 
\end{lemma}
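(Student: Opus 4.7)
The plan is to prove both directions by contraposition, using that the shorthand $\square\bigvee_{i\in I}\phi_i$ literally means ``every successor $y$ of $x$ satisfies some $\phi_i$'', so that its negation says ``some successor of $x$ refutes every $\phi_i$'', i.e.\ satisfies $\bigwedge_{i\in I}\neg\phi_i$. This is the standard duality between $\square\bigvee$ and $\diamondsuit\bigwedge$, and it is exactly what lets us translate between sets $\Sigma$ of formulas (as in the original definition) and indexed families $(\phi_i)_{i\in I}$ (as in the reformulation): take $\phi_i:=\neg\sigma_i$, or conversely $\Sigma:=\{\neg\phi_i\mid i\in I\}$.

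First I would prove the forward direction. Assume $x$ is $m$-saturated, and let $(\phi_i)_{i\in I}$ be a family with $x\Vdash\square\bigvee_{i\in I}\phi_i$. Assume for contradiction that for every finite $I_0\subseteq I$ one has $x\not\Vdash\square\bigvee_{i\in I_0}\phi_i$, i.e.\ there is a successor $y_0$ of $x$ with $y_0\Vdash\neg\phi_i$ for all $i\in I_0$. Setting $\Sigma:=\{\neg\phi_i\mid i\in I\}$, this exhibits, for each finite $\Sigma_0\subseteq\Sigma$, a successor of $x$ satisfying $\Sigma_0$. By $m$-saturation there is a successor $y$ of $x$ with $y\Vdash\Sigma$, i.e.\ $y\not\Vdash\phi_i$ for every $i\in I$, contradicting the hypothesis $x\Vdash\square\bigvee_{i\in I}\phi_i$.

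For the converse, assume the reformulated condition and let $\Sigma$ be a set of formulas such that every finite $\Sigma_0\subseteq\Sigma$ has a witness successor $y_0$ of $x$. Suppose no single successor satisfies all of $\Sigma$. Index $\Sigma$ as $(\sigma_i)_{i\in I}$ and put $\phi_i:=\neg\sigma_i$; since each successor $y$ of $x$ fails some $\sigma_i\in\Sigma$, we have $y\Vdash\phi_i$ for some $i$, hence $x\Vdash\square\bigvee_{i\in I}\phi_i$. The assumed condition gives a finite $I_0\subseteq I$ with $x\Vdash\square\bigvee_{i\in I_0}\phi_i$, which means that no successor of $x$ can satisfy the finite set $\Sigma_0:=\{\sigma_i\mid i\in I_0\}$, contradicting the choice of $\Sigma$.

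There is no real obstacle: the whole content of the lemma is the logical dualization $\neg\square\phi\equiv\diamondsuit\neg\phi$ applied level-by-level inside the informal infinitary shorthand, together with the observation that $\neg$ is available in the language so that the translations $\Sigma\leftrightarrow(\neg\phi_i)$ stay inside $\mathcal{L}_\Phi$. The only point to take a little care with is that $\bigvee_{i\in I}\phi_i$ is not itself a formula of $\mathcal{L}_\Phi$, so one must unfold $x\Vdash\square\bigvee_{i\in I}\phi_i$ into its defining quantifier form each time, rather than manipulating it syntactically.
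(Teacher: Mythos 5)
Your proof is correct and matches the paper's intent: the paper states this lemma as an immediate reformulation of the definition of $m$-saturation (via the shorthand for $x\Vdash\square\bigvee_{i\in I}\phi_{i}$) and gives no explicit proof, and your argument is exactly the intended unfolding, translating between $\Sigma$ and $(\phi_{i})_{i\in I}$ by negation. Proving both directions is appropriate here, since despite the wording ``is $m$-saturated, if'' the lemma is meant as an equivalence.
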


Image finite elements are clearly saturated, but they are not the
only ones. Below, we consider two examples of Kripke structures. In
both cases, we assume $v(x):=\emptyset$ for each $x$:
\begin{example}
On the set $S:=\{s\}\cup\{s_{i}\mid i\in\mathbb{N}\}$ consider the
relation $R=\{(s,s_{i})\mid i\in\mathbb{N}\}\cup\{(s_{i+1},s_{i})\mid i\in\mathbb{N}\}$.
Then for each $s_{i}$ we have $s_{i}\Vdash\square^{i+1}\bot$, but
$s_{i}\not\Vdash\square^{j}\bot$ for $j\le i$. Therefore $(S,R,v)$
is not saturated, since $s\Vdash\square\bigvee_{i\in\mathbb{N}}(\square^{i+1}\bot)$,
but for no finite $I_{0}\subseteq\mathbb{N}$ do we have $s\Vdash\square\bigvee_{i\in I_{0}}(\square^{i+1}\bot)$.
\[
\xyR{3pc}\xymatrix{ &  & s\ar@{-|>}[dll]\ar@{-|>}[dl]\ar@{-|>}[d]\ar@{}[dr]|{...}\\
s_{0} & s_{1}\ar@{-|>}[l] & s_{2}\ar@{-|>}[l] & ...\ar@{-|>}[l]
}
\]
\end{example}

Next, we modify the above structure by adding a ``point at infinity''
$s_{\infty}$ together with a self-loop $s_{\infty}\rightarrowtriangle s_{\infty}$
to obtain the following structure:
\begin{example}
\label{exa:compact_structure}
\[
\xymatrix{ &  & s\ar@{-|>}[dll]\ar@{-|>}[dl]\ar@{-|>}[d]\ar@{}[dr]|{...}\ar@{-|>}[drr]\\
s_{0} & s_{1}\ar@{-|>}[l] & s_{2}\ar@{-|>}[l] & ...\ar@{-|>}[l] & s_{\infty}\ar@(ur,dr)
}
\]
\end{example}

The point at infinity changes the situation. We claim:
\begin{lemma}
The Kripke structure in Example \ref{exa:compact_structure} is saturated.
\end{lemma}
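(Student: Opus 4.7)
The plan is to handle the one non-trivial element separately. All states other than $s$ have at most one successor: each $s_i$ ($i \ge 1$) has the unique successor $s_{i-1}$, the state $s_0$ has none, and $s_\infty$ has only itself. Hence these states are image finite and therefore $m$-saturated, as already noted in the text. It thus remains to verify that $s$ itself is $m$-saturated.

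The technical heart of the argument is a \emph{convergence lemma}: for every $\phi \in \mathcal{L}_\Phi$, either $\{n \in \mathbb{N} \mid s_n \Vdash \phi\}$ is cofinite in $\mathbb{N}$ and $s_\infty \Vdash \phi$, or it is finite and $s_\infty \not\Vdash \phi$. I would prove this by induction on the complexity of $\phi$. The atomic case is immediate since $v(x) = \emptyset$ everywhere, and the Boolean cases are easy because the family of subsets of $\mathbb{N}$ that are finite or cofinite is closed under complement and finite intersection. The only substantive case is $\phi = \square \psi$. Using $R(s_0) = \emptyset$, $R(s_n) = \{s_{n-1}\}$ for $n \ge 1$, and $R(s_\infty) = \{s_\infty\}$, one sees that $s_0 \Vdash \square\psi$ trivially, $s_n \Vdash \square\psi \iff s_{n-1} \Vdash \psi$ for $n \ge 1$, and $s_\infty \Vdash \square\psi \iff s_\infty \Vdash \psi$. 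Combining these three equivalences with the inductive hypothesis on $\psi$ yields the conclusion for $\square\psi$.

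With the convergence lemma in hand, $m$-saturation at $s$ follows by a compactness-style argument. Let $\Sigma \subseteq \mathcal{L}_\Phi$ be such that every finite $\Sigma_0 \subseteq \Sigma$ is satisfied at some successor of $s$. Suppose toward a contradiction that no single successor of $s$ satisfies all of $\Sigma$. For each $n \in \mathbb{N}$ pick $\phi_n \in \Sigma$ with $s_n \not\Vdash \phi_n$, and pick $\phi_\infty \in \Sigma$ with $s_\infty \not\Vdash \phi_\infty$. Applying the convergence lemma to $\phi_\infty$ gives some $N$ with $s_n \not\Vdash \phi_\infty$ for all $n \ge N$. The finite subset $\Sigma_0 := \{\phi_\infty, \phi_0, \ldots, \phi_{N-1}\} \subseteq \Sigma$ is then refuted at every successor of $s$, contradicting the assumption. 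Hence some successor of $s$ satisfies $\Sigma$, proving $s$ is $m$-saturated. The only step requiring care is the $\square$ case of the induction, but once the transition structure is unfolded it is routine; no genuine topological machinery is needed here, though the argument foreshadows the compactness perspective developed later in the paper.
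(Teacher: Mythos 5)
Your proof is correct and follows essentially the same route as the paper: isolate $s$ (all other points being image finite), prove by induction on formulas that truth at $s_\infty$ entails truth at all sufficiently large $s_n$, and then extract a finite subset of $\Sigma$ by splitting the successors of $s$ into the finitely many $s_n$ below the threshold and the tail covered by $\phi_\infty$. The only cosmetic difference is that you strengthen the inductive statement to a finite/cofinite dichotomy so that negation is handled directly, whereas the paper passes to negation normal form and treats $\square$ and $\diamondsuit$ as separate cases.
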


\begin{proof}
We first observe that for $s_{\infty}$ and any formula $\phi$ we
have: 
\[
s_{\infty}\Vdash\diamondsuit\phi\iff s_{\infty}\Vdash\phi\iff s_{\infty}\Vdash\square\phi.
\]
Next we prove for each formula $\phi$:
\begin{claim}
\emph{If $s_{\infty}\Vdash\phi$, then there is some $k\in\mathbb{N}$
such that $s_{i}\Vdash\phi$ for each $i\ge k$.}
\end{claim}

We prove this claim by induction over the construction of nnf-formulae: 
\begin{itemize}
\item For $\phi=\bot$ and $\phi=\top,$ the claim is vacuously true. For
$\phi=\phi_{1}\wedge\phi_{2},$ from $s_{\infty}\Vdash\phi_{1}\wedge\phi_{2}$,
the hypothesis yields $k_{1}$ and $k_{2}$ such that $s_{i}\Vdash\phi_{1}$
for each $i\ge k_{1}$ and $s_{i}\Vdash\phi_{2}$ for each $i\ge k_{2}$.
With $k=max(k_{1},k_{2})$ we obtain $s_{i}\Vdash\phi_{1}\wedge\phi_{2}$
for $i\ge k$. For $\phi=\phi_{1}\vee\phi_{2}$ we could similarly
choose $k=min(k_{1},k_{2}).$
\item For $\phi=\square\phi_{1}$ we have $s_{\infty}\Vdash\phi\iff s_{\infty}\Vdash\phi_{1}$.
By assumption, there is some $k$ such that $s_{i}\Vdash\phi_{1}$
for each $i\ge k$. It follows that $s_{i}\Vdash\square\phi_{1}$
for $i\ge k+1.$ Similarly we argue for $\phi=\diamondsuit\phi_{1}$. 
\end{itemize}
Now, to show that $s$ in the structure of Example \ref{exa:compact_structure}
is saturated, assume that $s\Vdash\square\bigvee_{i\in I}\phi_{i}$,
then there is some $i_{\infty}\in I$ such that $s_{\infty}\Vdash\phi_{i_{\infty}}$.
The claim above provides a $k$ such that for each $j\ge k$ we have
$s_{j}\Vdash\phi_{i_{\infty}}$, and for each $j<k$ there is some
$i_{j}\in I$ with $s_{j}\Vdash\phi_{i_{j}}$. Altogether then with
$I_{0}:=\{i_{0},i_{1},...,i_{k-1}\}\cup\{i_{\infty}\}$ we have $s\Vdash\square\bigvee_{i\in I_{0}}\phi_{i}$.

Thus $s$ is saturated, and all other points in the structure are
image finite, hence they are saturated, too.
\end{proof}

We can extend Lemma \ref{lem:Bisimilar_implies_logical_equivalence}
to ``infinitary formulas'' in the following sense:
\begin{lemma}
\label{lem:Bisimulation_preserves_compactness}[Bisimulations preserve saturation]
If $B\subseteq X_{1}\times X_{2}$ is a bisimulation and $(x,y)\in B$,
then $x$ is saturated iff $y$ is saturated.
\end{lemma}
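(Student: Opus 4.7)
The proof proceeds by exploiting the symmetry of the bisimulation definition together with Lemma \ref{lem:Bisimilar_implies_logical_equivalence}. Since $B^{-1}$ is again a bisimulation whenever $B$ is, and since $(y,x) \in B^{-1}$, it suffices to prove only one direction: assuming $x$ is saturated and $(x,y) \in B$, show that $y$ is saturated.

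The key intermediate step is the following transfer principle: for every family of formulas $(\phi_i)_{i \in I}$ from $\mathcal{L}_{\Phi}$,
\[
x \Vdash \square\bigvee_{i \in I}\phi_i \iff y \Vdash \square\bigvee_{i \in I}\phi_i.
\]
For the forward implication, pick an arbitrary $y' \in X_2$ with $y \rightarrowtriangle_{R_2} y'$. The zag condition of Definition \ref{def:bisimulation} gives some $x' \in X_1$ with $x \rightarrowtriangle_{R_1} x'$ and $x' B y'$. The assumption on $x$ then yields $i \in I$ with $x' \Vdash \phi_i$, and since $x'$ and $y'$ are bisimilar, Lemma \ref{lem:Bisimilar_implies_logical_equivalence} gives $y' \Vdash \phi_i$. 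The reverse implication is entirely analogous, using the zig condition instead.

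Given this transfer principle, saturation of $y$ follows immediately. Suppose $(\phi_i)_{i \in I}$ is a family with $y \Vdash \square\bigvee_{i \in I}\phi_i$. By the transfer principle, $x \Vdash \square\bigvee_{i \in I}\phi_i$. Since $x$ is saturated, there is a finite $I_0 \subseteq I$ with $x \Vdash \square\bigvee_{i \in I_0}\phi_i$, and applying the transfer principle once more (in the opposite direction) gives $y \Vdash \square\bigvee_{i \in I_0}\phi_i$.

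No step here is really delicate; the only thing to keep track of is that the expression $\square\bigvee_{i \in I}\phi_i$ is shorthand, not an actual formula in $\mathcal{L}_{\Phi}$, so Lemma \ref{lem:Bisimilar_implies_logical_equivalence} cannot be applied to it directly. The point of the transfer argument above is precisely to bypass this: Lemma \ref{lem:Bisimilar_implies_logical_equivalence} is invoked only on the individual $\phi_i$, which do belong to $\mathcal{L}_{\Phi}$, while the outer $\square \bigvee$ is handled by zig-zag.
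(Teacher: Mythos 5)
Your proof is correct and follows essentially the same route as the paper: transfer the shorthand $\square\bigvee_{i\in I}\phi_i$ from $y$ to $x$ via the zig/zag conditions and Lemma \ref{lem:Bisimilar_implies_logical_equivalence} applied to the individual $\phi_i$, invoke saturation of $x$ to get a finite $I_0$, and transfer $\square\bigvee_{i\in I_0}\phi_i$ back to $y$ (the paper does this last step by noting it is an honest formula preserved by bisimulation, you by reusing your transfer principle --- the same argument). Your explicit remarks on handling the converse via $B^{-1}$ and on why the base lemma cannot be applied to the infinitary shorthand directly are accurate and merely make explicit what the paper leaves implicit.
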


\begin{proof}
Assume that $x$ is saturated and $(x,y)\in B$. Suppose $y\Vdash\square\bigvee_{i\in I}\phi_{i}$,
then each $y'$ with $y\rightarrowtriangle y'$ satisfies one of the
formulas $\phi_{i}$. Each $x'$ with $x\rightarrowtriangle x'$ is
bisimilar to some $y'$ with $y\rightarrowtriangle y'$, so by Lemma
\ref{lem:Bisimilar_implies_logical_equivalence} each $x'$ satisfies
one of the $\phi_{i}$. This means that $x\Vdash\square\bigvee_{i\in I}\phi_{i}$.
By saturation of $x$ there is a finite subset $I_{0}\subseteq I$
with $x\Vdash\square\bigvee_{i\in I_{0}}\phi_{i}$. The latter, being
an honest modal formula, is preserved by bisimulation, so $y\Vdash\square\bigvee_{i\in I_{0}}\phi_{i}$.
\end{proof}

Lemma \ref{lem:Bisimilar_implies_logical_equivalence} implies that
for each $x\in\mathcal{X}$ and each formula $\phi$ we have 
\begin{equation}
x\Vdash\phi\iff\varphi(x)\Vdash\phi\label{eq:x approx phi(x)}
\end{equation}
 and Lemma \ref{lem:Bisimulation_preserves_compactness} tells us
that $x$ is saturated iff $\varphi(x)$ is saturated, which we might
combine to:
\begin{corollary}
\label{cor:Homos-preserve-saturation}Homomorphisms preserve and reflect
saturation.
\end{corollary}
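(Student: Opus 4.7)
The plan is to obtain this directly from Lemma \ref{lem:Bisimulation_preserves_compactness} applied to the graph of the homomorphism. First I would invoke Definition \ref{def:Homomorphism}: a homomorphism $\varphi:\mathcal{X}\to\mathcal{Y}$ is by definition a map whose graph $G(\varphi)=\{(x,\varphi(x))\mid x\in X\}$ is a bisimulation between $\mathcal{X}$ and $\mathcal{Y}$. In particular, for every $x\in X$ the pair $(x,\varphi(x))$ lies in a bisimulation.

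Then I would simply instantiate Lemma \ref{lem:Bisimulation_preserves_compactness} with $B:=G(\varphi)$ and $y:=\varphi(x)$. The lemma gives the biconditional ``$x$ saturated iff $\varphi(x)$ saturated''. The forward implication is precisely the statement that $\varphi$ \emph{preserves} saturation, while the backward implication is the statement that $\varphi$ \emph{reflects} saturation, so both halves of the corollary are extracted in one stroke.

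There is no real obstacle: the content is already fully packaged in Lemma \ref{lem:Bisimulation_preserves_compactness}, and the only step is to recognise that Definition \ref{def:Homomorphism} turns a homomorphism into a concrete bisimulation to which that lemma applies. If desired, I would add a sentence upgrading the pointwise biconditional to a statement about whole Kripke structures: since saturation of a structure is defined pointwise, $\mathcal{X}$ is saturated whenever $\mathcal{Y}$ is (reflection), and if $\varphi$ is surjective, $\mathcal{Y}$ is saturated whenever $\mathcal{X}$ is (preservation), both being immediate from the elementwise equivalence just established.
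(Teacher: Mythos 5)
Your proposal is correct and follows essentially the same route as the paper: the paper also derives the corollary by observing that the graph of a homomorphism is by definition a bisimulation containing each pair $(x,\varphi(x))$, and then applying Lemma \ref{lem:Bisimulation_preserves_compactness} to conclude that $x$ is saturated iff $\varphi(x)$ is. Your closing remark about lifting the pointwise equivalence to whole structures (using surjectivity for images) matches the paper's subsequent corollary on homomorphic images and preimages.
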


On the level of Kripke structures, rather than elements, this translates
to:
\begin{corollary}
Homomorphic images and homomorphic preimages of saturated Kripke structures
are saturated.
\end{corollary}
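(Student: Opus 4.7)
The plan is to reduce this to the element-level statement established in Corollary \ref{cor:Homos-preserve-saturation}. Recall that a Kripke structure is saturated precisely when each of its elements is saturated, so it suffices to verify the element-wise condition across all points of the image, respectively the preimage.

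First I would handle the preimage direction. Let $\varphi : \mathcal{X} \to \mathcal{Y}$ be a homomorphism and suppose $\mathcal{Y}$ is saturated. Pick any $x \in \mathcal{X}$; then $\varphi(x) \in \mathcal{Y}$ is saturated by assumption. Since Corollary \ref{cor:Homos-preserve-saturation} asserts that homomorphisms \emph{reflect} saturation, $x$ itself must be saturated. As $x$ was arbitrary, every element of $\mathcal{X}$ is saturated, so $\mathcal{X}$ is saturated.

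For the image direction, let $\varphi : \mathcal{X} \twoheadrightarrow \mathcal{Y}$ be a surjective homomorphism and suppose $\mathcal{X}$ is saturated. Given $y \in \mathcal{Y}$, surjectivity produces some $x \in \mathcal{X}$ with $\varphi(x) = y$. Since $x$ is saturated and Corollary \ref{cor:Homos-preserve-saturation} guarantees that homomorphisms \emph{preserve} saturation, $y = \varphi(x)$ is saturated as well. Hence $\mathcal{Y}$ is saturated.

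There is no real obstacle here; the corollary is essentially just a translation of the preceding element-wise corollary into the language of whole structures, with surjectivity in the image case ensuring that \emph{every} element of $\mathcal{Y}$ lies in the $\varphi$-image and therefore inherits saturation from some element of $\mathcal{X}$.
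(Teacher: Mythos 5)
Your proof is correct and matches the paper's (implicit) argument exactly: the corollary is just the element-wise statement of Corollary~\ref{cor:Homos-preserve-saturation} lifted to whole structures, with surjectivity covering all elements of the image and reflection handling the preimage. Nothing further is needed.
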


Let $\mathcal{X}$ and $\mathcal{Y}$ be Kripke structures. Recall
that for elements $x\in X$ and $y\in Y$ we write $x\approx y$,
if they are logically equivalent, i.e. they satisfy the same modal
formulae. The following generalization of the Hennessy-Milner theorem
\cite{HM} is credited in \cite{blackburn_rijke_venema_2001} to unpublished
notes of Alfred Visser: 
\begin{proposition}
Let $\mathcal{X}$ and $\mathcal{Y}$ be saturated Kripke structures.
Then elements $x\in X$ and $y\in Y$ are bisimilar if and only if
they are logically equivalent. In short: $\sim_{\mathcal{X},\mathcal{Y}}\,\,\,=\,\,\,\approx_{\mathcal{X},\mathcal{Y}}$.
\end{proposition}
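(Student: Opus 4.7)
The forward direction ($x \sim y \Rightarrow x \approx y$) is already Lemma \ref{lem:Bisimilar_implies_logical_equivalence}, so all the work lies in the converse. My plan is the standard Hennessy--Milner-style argument: I would show that logical equivalence $\approx$ is \emph{itself} a bisimulation between $\mathcal{X}$ and $\mathcal{Y}$, provided both structures are saturated. Since $\sim$ is by definition the largest bisimulation, this immediately gives $\approx\,\subseteq\,\sim$, and combined with the reverse inclusion yields the equality.

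So let $x \approx y$. Condition (1) of Definition \ref{def:bisimulation} is immediate: for every $p \in \Phi$, $x \Vdash p \iff y \Vdash p$, hence $v_{\mathcal{X}}(x) = v_{\mathcal{Y}}(y)$. By symmetry of the two structures' roles it suffices to verify the forth condition. Fix $x' \in X$ with $x \rightarrowtriangle_{R_{\mathcal{X}}} x'$, and set
\[
\Sigma \,:=\, \llbracket x' \rrbracket \,=\, \{\phi \in \mathcal{L}_\Phi \mid x' \Vdash \phi\}.
\]
I want to find a successor $y'$ of $y$ with $x' \approx y'$; the natural candidate is a successor realising all of $\Sigma$, which is exactly what saturation is designed to produce.

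The key step is to check the hypothesis of $m$-saturation of $y$ for the set $\Sigma$. Given any finite $\Sigma_0 \subseteq \Sigma$, the formula $\bigwedge \Sigma_0$ is a genuine modal formula satisfied by $x'$, so $x \Vdash \diamondsuit \bigwedge \Sigma_0$. By $x \approx y$ we then get $y \Vdash \diamondsuit \bigwedge \Sigma_0$, which means some successor of $y$ satisfies every formula in $\Sigma_0$. Since $y$ is saturated, this yields a successor $y'$ of $y$ with $y' \Vdash \Sigma$, i.e. $\llbracket x' \rrbracket \subseteq \llbracket y' \rrbracket$. The reverse inclusion is a short contradiction argument using closure under negation: if $y' \Vdash \phi$ but $x' \not\Vdash \phi$, then $\neg\phi \in \Sigma$, hence $y' \Vdash \neg\phi$, absurd. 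Thus $x' \approx y'$, and the forth condition is established. The back condition is proved identically, using saturation of $x$.

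I do not anticipate a real obstacle here; the only point that needs some care is the translation between ``every finite $\Sigma_0 \subseteq \Sigma$ is satisfied at some successor of $y$'' (the hypothesis of saturation) and the modal statement $y \Vdash \diamondsuit \bigwedge \Sigma_0$, which must be legitimised \emph{formula by formula} for each finite $\Sigma_0$ — it cannot be bundled into a single infinitary assertion about $\Sigma$. Otherwise the argument uses nothing beyond the definition of saturation, closure of $\mathcal{L}_\Phi$ under finite conjunction and negation, and the already-proved direction of Hennessy--Milner.
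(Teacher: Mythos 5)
Your proof is correct. Note, however, that the paper does not actually prove this proposition: it is stated as a known result, credited (via \cite{blackburn_rijke_venema_2001}) to unpublished notes of Alfred Visser, so there is no in-paper proof to compare against. Your route is the standard one and it works: you show that $\approx$ is itself a bisimulation, verifying the forth clause by applying saturation of $y$ to $\Sigma=\llbracket x'\rrbracket$, with the finite-satisfiability hypothesis supplied formula by formula through $x\Vdash\diamondsuit\bigwedge\Sigma_{0}$ and $x\approx y$; the reverse inclusion $\llbracket y'\rrbracket\subseteq\llbracket x'\rrbracket$ via $\neg\phi\in\Sigma$ is also right, and the case $\Sigma_{0}=\emptyset$ is harmless since $x\Vdash\diamondsuit\top$ forces $y$ to have a successor. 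It is worth observing that the paper later runs a closely analogous ``Hennessy--Milner style argument'' inside the proof that the canonical model is terminal, but in the dual form: there saturation is invoked in its $\square\bigvee_{i\in I}\phi_{i}$ reformulation, choosing distinguishing formulae $\phi_{i}$ for all successors and deriving a contradiction from a finite disjunction, whereas you use the original $\diamondsuit\bigwedge\Sigma_{0}$ formulation constructively. The two are interchangeable; yours has the small advantage of producing the witnessing successor $y'$ directly rather than by contradiction, while the paper's variant is the one that transfers verbatim to the terminal-object argument.
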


We shall next show that saturation allows us to describe the minimal
homomorphic image of a Kripke structure: 
\begin{lemma}
If $\mathcal{X}$ is saturated, then $\approx$ is a congruence relation
on $\mathcal{X}$.
\end{lemma}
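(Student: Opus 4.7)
The plan is to exhibit ${\approx}$ as the kernel of a homomorphism, which is precisely what it means for ${\approx}$ to be a congruence in the sense of Definition \ref{def:Homomorphism}. The target homomorphism will be the canonical projection $\pi\colon \mathcal{X} \twoheadrightarrow \mathcal{X}/{\approx}$ onto the quotient.

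First I would observe that on the saturated structure $\mathcal{X}$, Visser's version of the Hennessy--Milner theorem (the proposition just above) gives ${\approx_{\mathcal{X},\mathcal{X}}} = {\sim_{\mathcal{X}}}$. Hence ${\approx}$ inherits from $\sim_{\mathcal{X}}$ the property of being simultaneously an equivalence relation and a bisimulation on $\mathcal{X}$. Next I would endow the quotient set $X/{\approx}$ with a Kripke structure: define $\bar v([x]) := v(x)$, which is well defined because each $p \in \Phi$ is already a modal formula, so $x \approx x'$ forces $v(x) = v(x')$; and define the transition relation $\bar R$ by $[x] \rightarrowtriangle [y]$ iff there exist $x_0 \approx x$ and $y_0 \approx y$ with $x_0 \rightarrowtriangle y_0$.

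The key step is then to verify that the graph $G(\pi) = \{(x,[x]) \mid x \in X\}$ is a bisimulation between $\mathcal{X}$ and $\mathcal{X}/{\approx}$. The valuation clause is immediate from the definition of $\bar v$. The forward zig-zag is also immediate: if $x \rightarrowtriangle x'$, then by construction $[x] \rightarrowtriangle [x']$ and $(x',[x']) \in G(\pi)$. The content of the argument lies in the backward zig-zag: suppose $[x] \rightarrowtriangle [y]$ in the quotient; unpacking the definition, there are $x_0 \approx x$ and $y_0 \approx y$ with $x_0 \rightarrowtriangle y_0$. Since ${\approx}$ is a bisimulation (this is where saturation is crucially used, via Visser's theorem), the transition $x_0 \rightarrowtriangle y_0$ can be matched from $x$: there exists $x' \in X$ with $x \rightarrowtriangle x'$ and $x' \approx y_0$. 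Transitivity of ${\approx}$ gives $x' \approx y$, hence $[x'] = [y]$ and $(x', [y]) \in G(\pi)$, as required.

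Therefore $\pi$ is a homomorphism, and $\ker \pi = {\approx}$, so ${\approx}$ is a congruence on $\mathcal{X}$. The only nontrivial ingredient was the back zig-zag, and the only place saturation entered was in upgrading logical equivalence to a bisimulation; everything else is a routine verification that the quotient construction passes through.
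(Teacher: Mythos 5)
Your proposal is correct and follows essentially the same route as the paper: define the quotient structure on $X/_{\approx}$, use the fact that saturation (via Visser's Hennessy--Milner theorem) makes $\approx$ a bisimulation, and verify that the canonical projection is a homomorphism, the only nontrivial step being the backward zig-zag. The paper states the valuation on the quotient existentially while you define $\bar v([x]):=v(x)$ and check well-definedness, but these are equivalent and the argument is otherwise identical.
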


\begin{proof}
Clearly, $\approx$ is an equivalence relation and therefore it is
the kernel of the map $\pi_{\approx}$ sending arbitrary elements
$x$ to $x/_{\approx},$ which denotes the equivalence class of $\approx$
containing $x$. To show that $\pi_{\approx}$ is a homomorphism,
we need to exhibit a coalgebra structure on $X/_{\approx}$, the factor
set of $X$. Put 
\begin{description}
\item [{$x/_{\approx}\,\Vdash\,p\,:\iff\,\exists\,\,x'\approx x\,.\,x'\Vdash p.$}]~
\item [{$x/_{\approx}\,\rightarrowtriangle\,y/_{\approx}$}] $\,:\iff\,$
there exist $x'\approx x$ and $y'\approx y$ such that $x'\rightarrowtriangle y'.$
\end{description}
We check that $\pi_{\approx}:\mathcal{X\to\mathcal{X}}/_{\approx}$
is indeed a Kripke homomorphism: 
\begin{itemize}
\item Clearly, $x\Vdash p$ iff $x/_{\approx}\Vdash p$ by definition of
$\Vdash$ on $X/_{\approx}$, and 
\item if $x\rightarrowtriangle y$, then $x/_{\approx}\,\rightarrowtriangle\,y/_{\approx}$
is also immediate by definition. Conversely, given $\pi_{\approx}(x)=x/_{\approx}\,\rightarrowtriangle\,y/_{\approx}$
for some $y,$ we must find a $y''$ with $x\rightarrowtriangle y''$
and $\pi_{\approx}(y'')=y/_{\approx}.$ Since $x/_{\approx}\,\rightarrowtriangle\,y/_{\approx},$
we know that there are $x'\approx x$ and $y'\approx y$ with $x'\rightarrowtriangle y'$.
By assumption, $\approx$~ is a bisimulation, so it follows that
there is some $y''$ with $x\rightarrowtriangle y''$ and $y''\approx y'$.
Consequently, $x\rightarrowtriangle y''$ and $\pi_{\approx}(y'')=\pi_{\approx}(y')=y/_{\approx},$
as required.
\end{itemize}
\[
\xyR{.8pc}\xymatrix{x\ar@{..|>}[dd]\ar@{-}[dr]|{\,\,\approx\,\,}\ar@{|->}[rr]^{\pi_{\approx}} &  & x/_{\approx}\ar@{-|>}[dd]\\
 & x'\ar@{-|>}[dd]\ar@{|->}[ur]_{\pi_{\approx}}\\
y''\ar@{..}[dr]|\approx\ar@{|..>}[rr] &  & y/_{\approx}\\
 & y'\ar@{|->}[ur]_{\pi_{\approx}}
}
\]
\\

Thus, $\pi_{\approx}$ is a homomorphism with kernel $\approx$, which
makes the latter a congruence relation.
\end{proof}

\begin{definition}
A Kripke structure is called \emph{simple}, if it does not have a
proper homomorphic image.
\end{definition}

Clearly, if $x\not\approx y$ then there cannot be a homomorphism
$\varphi$ with $\varphi(x)=\varphi(y)$, since $x\approx\varphi(x)$
and $y\approx\varphi(y)$. Thus, if $\approx$ is a congruence, $\mathcal{X}/_{\approx}$
must be simple. It follows:
\begin{theorem}
A Kripke structure is saturated iff it has a simple and saturated
homomorphic image. 
\end{theorem}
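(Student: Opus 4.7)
The plan is to reduce both directions to Corollary \ref{cor:Homos-preserve-saturation} (homomorphisms preserve and reflect saturation). The easier direction is the converse: if $\mathcal{X}$ has a saturated homomorphic image $\mathcal{Y}$, then $\mathcal{X}$ is a homomorphic preimage of a saturated structure, so the reflection half of Corollary \ref{cor:Homos-preserve-saturation} yields saturation of $\mathcal{X}$ at once. Note that the simplicity hypothesis on $\mathcal{Y}$ is not needed in this direction.

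For the forward direction, assume $\mathcal{X}$ is saturated. I propose $\mathcal{X}/_{\approx}$ as the witnessing homomorphic image. The preceding lemma asserts that $\approx$ is a congruence on any saturated structure, so the canonical projection $\pi_{\approx} : \mathcal{X} \twoheadrightarrow \mathcal{X}/_{\approx}$ is a surjective Kripke homomorphism, and saturation of $\mathcal{X}/_{\approx}$ then follows from the preservation half of Corollary \ref{cor:Homos-preserve-saturation}.

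The bulk of the work lies in verifying simplicity of $\mathcal{X}/_{\approx}$, and this is the step I expect to require the most care. The argument I plan is: any surjective homomorphism $\psi : \mathcal{X}/_{\approx} \twoheadrightarrow \mathcal{Z}$ must be an isomorphism. By equation \eqref{eq:x approx phi(x)} applied to $\psi$, we have $a \approx \psi(a)$ for each $a \in \mathcal{X}/_{\approx}$; hence $\psi(a) = \psi(b)$ forces $a \approx \psi(a) = \psi(b) \approx b$ and thus $a \approx b$. But distinct elements of $\mathcal{X}/_{\approx}$ are distinct $\approx$-classes and, as remarked just before the theorem, are therefore not logically equivalent, so $a = b$. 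Hence $\psi$ is injective and, being surjective, is a bijection. To upgrade this to an isomorphism, I would invoke the fact that the graph of $\psi$ is a bisimulation and that the converse of a bisimulation is a bisimulation; since the converse graph of a bijection is the graph of its inverse, $\psi^{-1}$ is itself a Kripke homomorphism.

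The only subtlety to watch is that ``$\approx$'' must be read as logical equivalence possibly across different Kripke structures, so that the chain $a \approx \psi(a) = \psi(b) \approx b$ is a legitimate inference rather than a notational trick; this is a settled convention in the preliminaries, so no additional work is required.
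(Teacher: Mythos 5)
Your proof is correct and takes essentially the same route as the paper: the converse direction via the fact that homomorphic preimages of saturated structures are saturated (simplicity unused), and the forward direction by exhibiting $\mathcal{X}/_{\approx}$, which is a saturated homomorphic image by the congruence lemma together with Corollary \ref{cor:Homos-preserve-saturation}, and simple because $x\approx\varphi(x)$ prevents any homomorphism from identifying logically inequivalent points. Your additional step upgrading the resulting bijective homomorphism to an isomorphism (via the converse of its graph being a bisimulation) merely makes explicit what the paper leaves implicit.
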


Observe that Example \ref{exa:compact_structure} is a Kripke structure,
which is saturated and simple, but not image finite. In particular
it does not have a homomorphism to an image finite Kripke structure.

\section{$F$-coalgebras}

Given a category $\mathscr{C}$ and an endofunctor $F:\mathscr{C}\to\mathscr{C},$
an $F$\emph{-coalgebra} $\mathcal{A}=(A,\alpha)$ is an object $A$
from $\mathscr{C}$ together with a morphism $\alpha:A\to F(A).$
The object $A$ is called the \emph{base object} and $\alpha$ is
called the \emph{structure morphism} of the $F$-coalgebra $\mathcal{A}=(A,\alpha).$

Given a second coalgebra $\mathcal{B}=(B,\beta)$, a \emph{homomorphism}
$\varphi:\mathcal{A}\to\mathcal{B}$ is a $\mathscr{C}$-morphism
$\varphi:A\to B$ which renders the following diagram commutative:
\[
\xyR{1.5pc}\xymatrix{A\ar[d]_{\alpha}\ar[r]^{\varphi} & B\ar[d]^{\beta}\\
F(A)\ar[r]_{F(\varphi)} & F(B)
}
\]

$F$-coalgebras with homomorphisms, as defined above, form a category,
which we shall call $\mathscr{C}_{F}$, or simply $Coalg_{F}$ when
the base category is understood. When $\varphi$ in the above figure
is a monomorphism in the base category, then we call $\mathcal{A}$
a \emph{subcoalgebra} of $\mathcal{B}$. 

Kripke structures are prime examples of coalgebras. Indeed, the successor
relation $R\subseteq X\times X$ can be understood as a map $R:X\to\mathbb{P}(X)$
and the valuation $v$ as a map $v:X\to\mathbb{P}(\Phi)$, where $\mathbb{P}$
is the powerset functor and $\Phi$ is the fixed set of propositional
atoms. Thus a Kripke structure $\mathcal{X}=(X,R,v)$ is simply an
$F$-coalgebra for the combined functor $\mathbb{P}(-)\times\mathbb{P}(\Phi)$,
that is a map 
\[
\alpha:X\to\mathbb{P}(X)\times\mathbb{P}(\Phi),
\]
whose first component models the successor relation $R$ and whose
second component is the valuation $v.$

It is easy to check (see \cite{Rut96}), that a homomorphism of Kripke
structures, as introduced earlier, is the same as a homomorphism of
coalgebras when Kripke structures are understood as $\mathbb{P}(-)\times\mathbb{P}(\Phi)$-coalgebras.

In this case a subcoalgebra $\mathcal{U}$ of $\mathcal{X}$ is uniquely
determined by its base set $U$. To be precise, $U\subseteq X$ carries
a subcoalgebra of the Kripke structure $\mathcal{X}=(X,\alpha)$ if
and only if $R(U)\subseteq U.$

Choosing the finite-powerset functor $\mathbb{P}_{\omega}(-)$ instead
of $\mathbb{P}(-)$, coalgebras for the functor $\mathbb{P}_{\omega}(-)\times\mathbb{P}(\Phi)$
are precisely the image finite Kripke structures.

Saturated Kripke structures, however, lying between image finite and
arbitrary Kripke structures, do not seem to allow such a simple modelling
by an appropriate $Set$-functor between $\mathbb{P}_{\omega(-)}$
and $\mathbb{P}(-).$ Instead, we shall have to pass to the category
\emph{Top} of topological spaces and continuous mappings and model
them as coalgebras over \emph{Top}.

\section{Topological models}
\begin{definition}
\label{def:TopologicalModel}A \emph{topological model} is a Kripke
model $\mathcal{X}=(X,R,v)$ together with a topology $\tau$ on $X$,
such that
\begin{enumerate}
\item $\forall x\in X.\,R(x)$ is compact
\item $\forall O\in\tau.\,\langle R\rangle O\in\tau$
\item $\forall O\in\tau.\,[R]O\in\tau$
\item $\forall p\in\Phi.\left\llbracket p\right\rrbracket \in\tau$ and
$(X-\left\llbracket p\right\rrbracket )\in\tau.$
\end{enumerate}
\end{definition}

A \emph{homomorphism} $\varphi:\mathcal{X}\to\mathcal{Y}$ between
topological models is simply a Kripke-homomorphism (see def. \ref{def:Homomorphism})
which additionally is continuous with respect to the topologies on
$\mathcal{X}$ and $\mathcal{Y}$.

We need two simple technical lemmas:
\begin{lemma}
\label{lem:closedSets}If $C$ is closed, then so are $\langle R\rangle C$
and $[R]C$.
\end{lemma}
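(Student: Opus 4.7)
The plan is to derive both closure properties directly from the De Morgan duality between $\langle R\rangle$ and $[R]$ that was recorded in the Preliminaries, combined with conditions (2) and (3) of Definition~\ref{def:TopologicalModel}. No topological argument beyond passing to complements should be necessary; in particular, the compactness clause (1) is not used here, and the lemma is really a purely order-theoretic observation about the modal operators once the openness conditions are in place.

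Concretely, I would start by letting $C\subseteq X$ be closed and setting $O:=X-C\in\tau$. Applying the dualities
\[
[R](X-U)\;=\;X-\langle R\rangle U,\qquad \langle R\rangle(X-U)\;=\;X-[R]U
\]
to $U=C$, I get
\[
[R]O\;=\;X-\langle R\rangle C\qquad\text{and}\qquad\langle R\rangle O\;=\;X-[R]C.
\]
Now conditions~(3) and~(2) of Definition~\ref{def:TopologicalModel} say that $[R]O$ and $\langle R\rangle O$ are both open. Reading the two displayed equalities right-to-left, this is exactly the assertion that the complements of $\langle R\rangle C$ and $[R]C$ are open, i.e.\ that $\langle R\rangle C$ and $[R]C$ are closed.

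There is essentially no obstacle; the only thing to be a little careful about is the direction of the De Morgan identities. One might worry that the argument requires the complement of an open set to again be open, but this is not what is being used: the input $C$ is assumed closed, so $X-C$ is open, and conditions~(2) and~(3) are applied to that single open set, not to a complement of an open set. If desired, the same conclusion can be phrased ``pointwise'' without any duality: given $x\notin\langle R\rangle C$ one has $R(x)\subseteq X-C$, so $x\in[R](X-C)$, and condition~(3) yields an open neighbourhood of $x$ disjoint from $\langle R\rangle C$; the case of $[R]C$ is symmetric via condition~(2).
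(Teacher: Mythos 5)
Your proof is correct and follows essentially the same route as the paper: both arguments pass to the open complement $O=X-C$ and invoke the duality between $\langle R\rangle$ and $[R]$ together with conditions~(2) and~(3) of Definition~\ref{def:TopologicalModel}. The only cosmetic difference is that you apply the duality identities with $U=C$ while the paper applies them with $U=O$, which amounts to the same computation.
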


\begin{proof}
Let $C=X-O$ where $O$ is open, then $\langle R\rangle C=\langle R\rangle(X-O)=X-[R]O$
and $[R]C=[R](X-O)=X-\langle R\rangle O.$
\end{proof}

\begin{lemma}
\label{lem:FormulasAreClopen}In every topological model the sets
$\left\llbracket \phi\right\rrbracket $ where $\phi\in\mathcal{L}_{\Phi}$,
are \emph{clopen} \emph{(clo}sed and o\emph{pen)}.
\end{lemma}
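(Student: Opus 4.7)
The plan is to prove this by structural induction on modal formulae $\phi \in \mathcal{L}_\Phi$. The topological conditions in Definition \ref{def:TopologicalModel} together with Lemma \ref{lem:closedSets} are precisely tailored so that each syntactic clause of modal logic preserves the clopen property, so the induction should go through smoothly.

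For the base case $\phi = p \in \Phi$, clopenness of $\llbracket p \rrbracket$ is immediate from condition (4) of Definition \ref{def:TopologicalModel}, which states exactly that both $\llbracket p \rrbracket$ and its complement are open. For the constants $\top$ and $\bot$, the sets $X$ and $\emptyset$ are trivially clopen.

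For the induction step, the Boolean cases are routine: $\llbracket \neg \psi \rrbracket = X \setminus \llbracket \psi \rrbracket$ is clopen whenever $\llbracket \psi \rrbracket$ is, and finite intersections and unions of clopens are clopen, handling $\wedge$ and $\vee$. For the modal cases, I would observe $\llbracket \square \psi \rrbracket = [R]\llbracket \psi \rrbracket$ and $\llbracket \diamondsuit \psi \rrbracket = \langle R \rangle \llbracket \psi \rrbracket$. Since by induction $\llbracket \psi \rrbracket$ is both open and closed, conditions (2) and (3) of Definition \ref{def:TopologicalModel} give openness of $\langle R \rangle \llbracket \psi \rrbracket$ and $[R]\llbracket \psi \rrbracket$ respectively, while Lemma \ref{lem:closedSets} gives closedness of both. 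Hence each is clopen.

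There is no real obstacle here: the definition of topological model was arranged so that precisely the operations appearing in the syntax of $\mathcal{L}_\Phi$ (Boolean combinations, $\square$, $\diamondsuit$, and atomic propositions) preserve the clopen sets. The only minor subtlety is to note that even if one only defined validity for $\square$ and uses $\diamondsuit \phi := \neg \square \neg \phi$ as a definitional shorthand, the $\diamondsuit$ case follows either directly (using condition (2) plus Lemma \ref{lem:closedSets}) or from the already-handled cases of $\neg$ and $\square$; either way the induction closes.
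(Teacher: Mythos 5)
Your proposal is correct and follows essentially the same route as the paper: induction on formula structure, with the base case from condition (4) of Definition \ref{def:TopologicalModel}, Boolean cases handled by standard closure of clopens, and the modal cases via $\left\llbracket \square\phi\right\rrbracket =[R]\left\llbracket \phi\right\rrbracket$ and $\left\llbracket \diamondsuit\phi\right\rrbracket =\langle R\rangle\left\llbracket \phi\right\rrbracket$ together with conditions (2), (3) and Lemma \ref{lem:closedSets}. Your remark about $\diamondsuit$ being definable as $\neg\square\neg$ is a harmless additional observation; the paper simply treats $\diamondsuit$ alongside $\square$ directly.
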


\begin{proof}
By induction on the construction of $\phi:$

For $p\in\Phi$ the assertion is part of the definition. If the claim
is true for $\phi,\phi_{1}$ and $\phi_{2}$, then it is obviously
true for all boolean compositions, in particular for $\neg\phi$ and
for $\phi_{1}\wedge\phi_{2}$.

Lemma \ref{lem:closedSets} and Definition \ref{def:TopologicalModel}
ensure that the claim remains true for $\square\phi$ and $\diamondsuit\phi,$
since $\left\llbracket \square\phi\right\rrbracket =[R]\left\llbracket \phi\right\rrbracket $
and for $\left\llbracket \diamondsuit\phi\right\rrbracket =\langle R\rangle\left\llbracket \phi\right\rrbracket .$
\end{proof}

Topological models with continuous Kripke-Homomorphisms obviously
form a category which we shall call $\mathscr{K}_{Top}$.

\section{The compact Vietoris-functor}

Leopold Vietoris, in his 1922 paper \cite{Viet1922}, defined his
\emph{domains of second order }(``Bereiche zweiter Ordnung'') as
the collection of closed subsets of a compact Hausdorff space. Later
several generalizations and modifications of this topology were introduced
and studied under the heading of \emph{hypertopolog}y.

In connection with Kripke structures, Bezhanishvili, Fontaine and
Venema \cite{BezhFonVen10} consider the Vietoris functor and Vietoris
coalgebras over Stone spaces, i.e. compact and totally disconnected
Hausdorff spaces. 

In compact Hausdorff spaces, all closed subsets are compact. Hence,
when extending the Vietoris functor to act on arbitrary topological
spaces $\mathcal{X}=(X,\tau)$, one has the choice to take as base
set for $\mathbb{V}(\mathcal{X})$ all closed subsets or all compact
subsets of $X.$ In \cite{HofmannNevesNora} the authors show that
both choices lead to endofunctors on the category $Top$ of topological
spaces, the \emph{``lower'' Vietoris functor}, and the \emph{compact
Vietoris functor}. Here we shall only need to work with the latter,
which for us then is ``the'' Vietoris functor: 

Given a topological space $\mathcal{X}=(X,\tau)$, the \emph{Vietoris
space} $\mathbb{V}(\mathcal{X})$ takes as base set the collection
of all compact subsets $K\subseteq X$. The \emph{Vietoris topology}
on $\mathbb{V}(\mathcal{X})$ is generated by a subbase consisting
of all sets
\begin{align*}
\langle O\rangle & :=\{K\in\mathbb{V}(\mathcal{X})\mid K\cap O\ne\emptyset\},\text{and}\\{}
[O] & :=\{K\in\mathbb{V}(\mathcal{X})\mid K\subseteq O\}
\end{align*}
where $O\in\tau$. 

If $\mathcal{X}=(X,\tau)$ and $\mathcal{Y}=(Y,\rho)$ are topological
spaces, then the \emph{Vietoris functor} sends a continuous function
$f:\mathcal{X}\to\mathcal{Y}$ to a map $(\mathbb{V}f):\mathbb{V}(\mathcal{X})\to\mathbb{V}(\mathcal{Y})$
by setting $(\mathbb{V}f)(K):=f(K).$ Recall that the image $f(K)$
of a compact set $K$ by a continuous map $f$ is always compact.
It is easy to calculate that $(\mathbb{V}f)^{-1}(\left\langle O\right\rangle )=\left\langle f^{-1}(O)\right\rangle $
and $(\mathbb{V}f)^{-1}(\left[O\right])=[f^{-1}(O)],$ hence $(\mathbb{V}f)$
is continuous with respect to the Vietoris topologies. In fact, $(\mathbb{V}f)^{-1}$
takes the defining subbase of $\mathbb{V}(\mathcal{Y})$ to the defining
subbase of $\mathbb{V}(\mathcal{X})$. This shows that $\mathbb{V}$
is indeed an endofunctor on $Top$.

Let now $\mathbb{P}(\Phi)$ be the powerset of $\Phi$, equipped with
the topology having as a base the set of all

\[
\uparrow\hspace{-2bp}p:=\{u\subseteq\Phi\mid p\in u\}
\]
where $p\in\Phi,$ together with their complements $\mathbb{P}(\Phi)\,-\uparrow\hspace{-2bp}p$.
This topology, trivially, is Hausdorff, but in general not compact.
\begin{definition}
The product $\mathbb{V}(-)\times\mathbb{P}(\Phi)$ of the Vietoris
functor $\mathbb{V}$ with the constant functor of value $\mathbb{P}(\Phi),$
carrying the above topology, will be called the $\Phi$-\emph{Vietoris
functor, or simply the }Vietoris functor\emph{, when $\Phi$ is clear. }
\end{definition}

The Vietoris functor is an endofunctor on the category $Top$ of topological
spaces with continuous maps. We can now define:

\emph{Vietoris coalgebras} are coalgebras over $Top$ for the $\Phi$-Vietoris
functor $\mathbb{V}(-)\times\mathbb{P}(\Phi)$, and the following
result shows that they agree with our topological models: 
\begin{theorem}
\label{thm:Vietoris=00003DTopMod}Vietoris coalgebras with coalgebra
homomorphisms are the same as topological models with continuous Kripke-homomorphisms.
\end{theorem}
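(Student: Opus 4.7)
The plan is to establish the isomorphism of categories by exhibiting a bijective correspondence both at the level of objects and at the level of morphisms, and then noting that this correspondence is functorial.

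On objects, given a Vietoris coalgebra $\alpha : X \to \mathbb{V}(X,\tau) \times \mathbb{P}(\Phi)$, I would define $R(x) := \pi_{1}(\alpha(x))$ and $v(x) := \pi_{2}(\alpha(x))$. Clause 1 of Definition \ref{def:TopologicalModel} holds automatically because the codomain of $\pi_{1}\circ\alpha$ is $\mathbb{V}(X,\tau)$, whose elements are by definition compact subsets. The crucial point is that the remaining three clauses are precisely what continuity of $\alpha$ says on the natural subbase of $\mathbb{V}(X,\tau)\times\mathbb{P}(\Phi)$. Specifically, I would check
\[
\alpha^{-1}(\langle O\rangle\times\mathbb{P}(\Phi)) = \langle R\rangle O,\qquad \alpha^{-1}([O]\times\mathbb{P}(\Phi)) = [R]O,
\]
\[
\alpha^{-1}(\mathbb{V}(X,\tau)\times{\uparrow}p) = \llbracket p\rrbracket,\qquad \alpha^{-1}(\mathbb{V}(X,\tau)\times(\mathbb{P}(\Phi) - {\uparrow}p)) = X - \llbracket p\rrbracket,
\]
so $\alpha$ is continuous iff clauses 2, 3 and 4 all hold. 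This is the direction where the argument is tightest, and it is essentially the whole content of the object-level correspondence; I expect no real obstacle since each identity is immediate from the definitions of $\langle R\rangle, [R], \langle O\rangle, [O], \uparrow p$. Conversely, any topological model $(X,R,v,\tau)$ yields $\alpha(x):=(R(x),v(x))$ with values in $\mathbb{V}(X,\tau)\times\mathbb{P}(\Phi)$ by clause 1, and the same four identities show continuity.

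On morphisms, given a continuous $\varphi : X \to Y$, I would unpack the coalgebra-square condition $\beta\circ\varphi = (\mathbb{V}\varphi\times\mathrm{id}_{\mathbb{P}(\Phi)})\circ\alpha$ into the two equations $\varphi(R_{\mathcal{X}}(x)) = R_{\mathcal{Y}}(\varphi(x))$ and $v_{\mathcal{Y}}(\varphi(x)) = v_{\mathcal{X}}(x)$. The second equation is clause 1 of Definition \ref{def:bisimulation} applied to $G(\varphi)$, and the equation $\varphi(R_{\mathcal{X}}(x)) = R_{\mathcal{Y}}(\varphi(x))$ splits into its two inclusions, which are exactly the forth and back conditions 2 and 3 of Definition \ref{def:bisimulation} for the graph $G(\varphi)$. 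Thus $\varphi$ is a coalgebra morphism iff it is continuous and its graph is a bisimulation, i.e.\ iff it is a morphism in $\mathscr{K}_{Top}$.

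Finally I would remark that this assignment preserves identities and composition in an evident way (identities go to identities, composition of continuous maps corresponds to composition of coalgebra homomorphisms), so it defines mutually inverse functors between $\mathit{Coalg}_{\mathbb{V}(-)\times\mathbb{P}(\Phi)}$ and $\mathscr{K}_{Top}$. The potentially confusing point, which I would spell out, is that the subbase of the product topology on $\mathbb{V}(X,\tau)\times\mathbb{P}(\Phi)$ consists of the four families listed above, so continuity of $\alpha$ is equivalent to the four preimage identities; beyond this observation, the verification is routine.
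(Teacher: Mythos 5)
Your proposal is correct and follows essentially the same route as the paper: decompose $\alpha=(R,v)$, observe that the compactness clause is automatic from the codomain $\mathbb{V}(\mathcal{X})$, and identify continuity with clauses 2--4 via the preimage identities $R^{-1}([O])=[R]O$, $R^{-1}(\langle O\rangle)=\langle R\rangle O$, $v^{-1}({\uparrow}p)=\llbracket p\rrbracket$. The only cosmetic differences are that the paper checks continuity of the two component maps separately rather than via the subbase of the product topology, and it dispatches the morphism correspondence in one sentence (appealing to the known identification of Kripke homomorphisms with $\mathbb{P}(-)\times\mathbb{P}(\Phi)$-coalgebra homomorphisms) where you unpack the coalgebra square into the graph-bisimulation conditions explicitly.
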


\begin{proof}
Given a topological model $(X,R,v)$ with underlying space $\mathcal{X}=(X,\tau),$
we can consider it as a Vietoris coalgebra by defining the structure
map $\alpha:\mathcal{X}\to\mathbb{V}(\mathcal{X})\times\mathbb{P}(\Phi)$
as $\alpha(x):=(R(x),v(x)).$ To show that $\alpha$ is continuous,
we must verify that both components are continuous.

Continuity of (the map) $R:\mathcal{X}\to\mathbb{V}(\mathcal{X})$
needs to be tested only on the subbase for the Vietoris topology on
$\mathbb{V}(\mathcal{X})$. Indeed, assume $O\in\tau,$ then

\begin{align*}
R^{-1}(\left[O\right]) & =\{x\in X\mid R(x)\in\left[O\right]\}\\
 & =\{x\in X\mid R(x)\subseteq O\}=[R]O
\end{align*}
is open in $\tau$ and so is

\begin{align*}
R^{-1}(\left\langle O\right\rangle ) & =\{x\in X\mid R(x)\in\left\langle O\right\rangle \}\\
 & =\{x\in X\mid R(x)\cap O\ne\emptyset\}=\langle R\rangle O.
\end{align*}
To see that $v$ also is continuous, let $\uparrow\hspace{-2bp}p\,\subseteq\,\mathbb{P}(\Phi)$
be given, then

\[
v^{-1}(\uparrow\hspace{-2bp}p)=\{x\in X\mid p\in v(x)\}=\llbracket p\rrbracket\in\tau
\]
as well as 
\[
v^{-1}(\mathbb{P}(X)-\uparrow\hspace{-2bp}p)=\{x\in X\mid p\notin v(x)\}=(X-\llbracket p\rrbracket)\in\tau.
\]

Conversely, let $(X,\alpha)$ be a Vietoris coalgebra, with $\mathcal{X}=(X,\tau)$
as base space and $\alpha:\mathcal{X}\to\mathbb{V}(\mathcal{X})\times\mathbb{P}(\Phi)$
as structure morphism, then $\alpha=(R,v)$ with $R:=\pi_{1}\circ\alpha:\mathcal{X}\to\mathbb{V}(\mathcal{X})$
and $v:=\pi_{2}\circ\alpha:\mathcal{X}\to\mathbb{P}(\Phi)$, both
of which are continuous. Since $R(x)\in\mathbb{V}(\mathcal{X})$,
it is necessarily compact. If $O$ is open in $(X,\tau)$ then $\left[O\right]$
is open in $\mathbb{V}(\mathcal{X}),$ hence $R^{-1}([O])$ must be
open in $(X,\tau)$, hence so is

\[
[R]O=\{x\in X\mid R(x)\subseteq O\}=\{x\in X\mid R(x)\in[O]\}=R^{-1}([O]).
\]
Similarly, for $O$ open in $(X,\tau)$ we have $\left\langle O\right\rangle $
open in $\mathbb{V}(\mathcal{X}),$ hence $R^{-1}(\left\langle O\right\rangle )$
is open in $\mathcal{X},$ which means that

\[
\langle R\rangle O=\{x\in X\mid R(x)\cap O\ne\emptyset\}=\{x\in X\mid R(x)\in\left\langle O\right\rangle \}=R^{-1}(\left\langle O\right\rangle )
\]
is open as well.

Finally, for $p\in\Phi$ we have $\uparrow\hspace{-2bp}p=\{u\subseteq\Phi\mid p\in u\}$
clopen in the topology on $\mathbb{P}(\Phi)$, so also $\left\llbracket p\right\rrbracket =\{x\in X\mid p\in v(x)\}=v^{-1}(\{u\subseteq\Phi\mid p\in u\})=v^{-1}(\uparrow\hspace{-2bp}p)$
as well as its complement $X-\left\llbracket p\right\rrbracket $
are open in $\tau$.

Coalgebra homomorphisms between Vietoris coalgebras, as coalgebras
over $Top$, must be continuous and preserve both $R$ and $v$ which
means they are the same as continuous Kripke homomorphisms between
the corresponding topological models.
\end{proof}

\section{Characterization theorem}

The following theorem shows that saturated Kripke structures arise
precisely as the algebraic reducts of Vietoris coalgebras when forgetting
the topology. Bezhanishvili, Fontaine and Venema \cite{BezhFonVen10},
studying Vietoris coalgebras over Stone spaces, show that in this
case the underlying Kripke structures are saturated. In contrast to
their work, we consider the Vietoris functor over arbitrary topological
spaces, which allows us to obtain an equivalence:
\begin{theorem}
\label{thm:Compact Structures}For a Kripke structure $\mathcal{X}$
the following are equivalent:
\begin{enumerate}
\item $\mathcal{X}$ is saturated,
\item $\mathcal{X}$ is the algebraic reduct of a topological model,
\item $\mathcal{X}$ is the algebraic reduct of a Vietoris coalgebra.
\end{enumerate}
\end{theorem}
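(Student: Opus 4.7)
The plan is to exploit the equivalence of (2) and (3), which is already established in Theorem~\ref{thm:Vietoris=00003DTopMod}, and concentrate on the equivalence (1)~$\Leftrightarrow$~(2). For the direction (2)~$\Rightarrow$~(1), assume $\mathcal{X}$ carries a topology $\tau$ making it a topological model. By Lemma~\ref{lem:FormulasAreClopen}, each $\llbracket\phi\rrbracket$ is clopen. If $x\Vdash\square\bigvee_{i\in I}\phi_{i}$, then $R(x)\subseteq\bigcup_{i\in I}\llbracket\phi_{i}\rrbracket$ is an open cover of the compact set $R(x)$, so a finite subcover exists, yielding a finite $I_{0}\subseteq I$ with $x\Vdash\square\bigvee_{i\in I_{0}}\phi_{i}$; hence $x$ is $m$-saturated.

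For the harder direction (1)~$\Rightarrow$~(2), I would define a topology $\tau$ on $X$ using the semantics itself: take the collection $\mathcal{B}:=\{\llbracket\phi\rrbracket\mid\phi\in\mathcal{L}_{\Phi}\}$ as subbase. Because $\llbracket\phi\rrbracket\cap\llbracket\psi\rrbracket=\llbracket\phi\wedge\psi\rrbracket$ and $X\setminus\llbracket\phi\rrbracket=\llbracket\neg\phi\rrbracket$, the family $\mathcal{B}$ is in fact a clopen base. I would then check the four defining conditions of Definition~\ref{def:TopologicalModel}. Condition~(4) is immediate, since each $\llbracket p\rrbracket$ lies in $\mathcal{B}$ and so does its complement. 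For condition~(2), observe that $\langle R\rangle\llbracket\phi\rrbracket=\llbracket\diamondsuit\phi\rrbracket\in\mathcal{B}$, and since $\langle R\rangle$ commutes with arbitrary unions, $\langle R\rangle O$ is open for every open $O$.

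The two remaining conditions are where saturation does the real work, and they are essentially the same calculation. For condition~(1), take any cover of $R(x)$ by basic opens, say $R(x)\subseteq\bigcup_{i\in I}\llbracket\phi_{i}\rrbracket$; this is precisely the statement $x\Vdash\square\bigvee_{i\in I}\phi_{i}$, so saturation of $x$ produces a finite $I_{0}\subseteq I$ with $R(x)\subseteq\bigcup_{i\in I_{0}}\llbracket\phi_{i}\rrbracket$, proving compactness (the reduction from arbitrary open covers to basic open covers is standard). For condition~(3), write an arbitrary open $O$ as $\bigcup_{i\in I}\llbracket\phi_{i}\rrbracket$. Then $y\in O$ iff $y$ satisfies some $\phi_{i}$, so $x\in[R]O$ iff $x\Vdash\square\bigvee_{i\in I}\phi_{i}$. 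By saturation this holds iff there is a finite $I_{0}\subseteq I$ with $x\in\llbracket\square\bigvee_{i\in I_{0}}\phi_{i}\rrbracket$, the latter being a genuine modal formula, and therefore a basic open. Taking the union over all such finite $I_{0}$ exhibits $[R]O$ as a union of basic opens, hence open.

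The main obstacle is precisely the handling of $[R]O$: unlike $\langle R\rangle$, the operator $[R]$ does not distribute over infinite unions in general, and this is exactly the failure that $m$-saturation rules out. All three non-trivial conditions (compactness of $R(x)$, openness of $[R]O$, and already the $(2)\Rightarrow(1)$ direction via compactness) are essentially one and the same topological reflex of the saturation property, which makes the proof conceptually clean once the topology generated by $\mathcal{L}_{\Phi}$ has been identified as the correct choice.
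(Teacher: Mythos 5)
Your proof is correct and follows essentially the same route as the paper: the easy direction $(2)\Rightarrow(1)$ via clopen validity sets and compactness of $R(x)$, the equivalence $(2)\Leftrightarrow(3)$ by citing Theorem~\ref{thm:Vietoris=00003DTopMod}, and for $(1)\Rightarrow(2)$ the topology generated by the sets $\llbracket\phi\rrbracket$, with saturation used exactly where the paper uses it, namely for compactness of $R(x)$ and for expressing $[R]O$ as a union of sets $\llbracket\square\bigvee_{i\in I_{0}}\phi_{i}\rrbracket$ over finite $I_{0}$. The only cosmetic difference is that you note explicitly that the validity sets form a clopen base and leave the reduction to basic covers as a standard remark, which the paper instead spells out with a double union.
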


\begin{proof}
``$(1)\to(2)"$: Assuming that $\mathcal{X}=(X,R,v)$ is saturated,
let $\tau$ be the topology on $X$ generated by the sets $\left\llbracket \phi\right\rrbracket $
for $\phi\in\mathcal{L}_{\Phi}$. It follows that each $\left\llbracket \phi\right\rrbracket $
is clopen (\emph{clo}sed and \emph{open}), so each open set can be
written as $O=\bigcup_{i\in I}\left\llbracket \phi_{i}\right\rrbracket $
and each closed set as $C=\bigcap_{i\in I}\left\llbracket \phi_{i}\right\rrbracket $.

To show that $\mathcal{X}$ with this topology $\tau$ is a topological
model, we show first, that $R(x)$ is topologically compact. For that,
assume $R(x)\subseteq\bigcup_{i\in I}O_{i}$, then $R(x)\subseteq\bigcup_{i\in I}\bigcup_{j\in J_{i}}\left\llbracket \phi_{j}\right\rrbracket $,
i.e. 
\[
x\Vdash\square\bigvee_{i\in I}\bigvee_{j\in J_{i}}\phi_{j}.
\]
By saturation of $\mathcal{X}$, there are finitely many $j_{i_{1}}\in J_{i_{1}},...,j_{i_{n}}\in J_{i_{n}}$
with 
\[
x\Vdash\square(\phi_{j_{i_{1}}}\vee...\vee\phi_{j_{i_{n}}}),
\]
so $R(x)\subseteq O_{i_{1}}\cup...\cup O_{i_{n}}.$

Next, to see that $\langle R\rangle O$ is open, we calculate 
\begin{eqnarray*}
\langle R\rangle O & = & \langle R\rangle(\bigcup_{i\in I}\left\llbracket \phi_{i}\right\rrbracket )\\
 & = & \bigcup_{i\in I}\langle R\rangle\left\llbracket \phi_{i}\right\rrbracket \\
 & = & \bigcup_{i\in I}\{x\in X\mid x\models\diamondsuit\phi_{i}\}\phantom{{\text{\,for some very finite }I_{0}\subseteq I}}\\
 & = & \bigcup_{i\in I}\left\llbracket \diamondsuit\phi_{i}\right\rrbracket ,
\end{eqnarray*}
which is open, and similarly 

\begin{eqnarray*}
[R]O & = & [R](\bigcup_{i\in I}\left\llbracket \phi_{i}\right\rrbracket )\\
 & = & \{x\in X\mid R(x)\subseteq\bigcup_{i\in I}\left\llbracket \phi_{i}\right\rrbracket \}\\
 & = & \{x\in X\mid R(x)\subseteq\bigcup_{i\in J_{x}}\left\llbracket \phi_{i}\right\rrbracket \text{\,for some finite }J_{x}\subseteq I\}\\
 & = & \{x\in X\mid x\models\square\bigvee_{i\in J_{x}}\phi_{i}\,\text{for some finite}J_{x}\subseteq I\,\}\\
 & = & \bigcup_{J\subseteq I,\,J\text{\,finite}}\left\llbracket \square\bigvee_{i\in J}\phi_{i}\right\rrbracket ,
\end{eqnarray*}
which is open as well.

``$(2)\leftrightarrow(3)"$ is Theorem \ref{thm:Vietoris=00003DTopMod}.

``$(2)\to(1)"$ : Given a Kripke model $\mathcal{X}$ which is the
algebraic reduct of a topological model, assume $x\Vdash\square\bigvee_{i\in I}\phi_{i}$,
then $R(x)\subseteq\bigcup_{i\in I}\left\llbracket \phi_{i}\right\rrbracket $.
By Lemma \ref{lem:FormulasAreClopen}, the right hand side is a union
of open sets, thus by compactness of $R(x)$ there is a finite subset
$I_{0}\subseteq I$ with $R(x)\subseteq\bigcup_{i\in I_{0}}\phi_{i}$,
which means $x\Vdash\square\bigvee_{i\in I_{0}}\phi_{i}$.
\end{proof}

Given a saturated Kripke-structure $\mathcal{X}=(X,R,v)$, let $F(\mathcal{X})$
denote the Vietoris coalgebra, as constructed above, and conversely,
given a Vietoris coalgebra $\mathcal{A}$, let $G(\mathcal{A})$ be
the corresponding saturated Kripke structure. On objects, $F$ and
$G$ are clearly inverses to each other.

On morphisms, this is true as well, since a homomorphism $\varphi:\mathcal{X}\to\text{\ensuremath{\mathcal{Y}}}$
between saturated Kripke structures preserves (and reflects) modal
formulae (see Lemma \ref{lem:Bisimilar_implies_logical_equivalence})
and the topologies on $F(\mathcal{X})$ and $F(\mathcal{Y})$ are
generated by validity sets of formulae. Conversely, a morphism between
Vietoris coalgebras $\mathcal{A}$ and $\mathcal{B}$ is automatically
a Kripke-homomorphism by forgetting continuity.
\begin{corollary}
Saturated Kripke structures, topological models, and Vietoris coalgebras
are isomorphic as categories.
\end{corollary}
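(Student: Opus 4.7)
The plan is to package the object-level bijection of Theorem \ref{thm:Compact Structures} into full functors $F$ and $G$ and then verify that they act as mutual inverses on morphisms as well. Since Theorem \ref{thm:Vietoris=00003DTopMod} already identifies topological models with Vietoris coalgebras as one and the same category, it suffices to establish the categorical isomorphism between saturated Kripke structures and topological models; the third leg of the triangle then closes by composition with that identification.

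I would define $F$ on a saturated Kripke structure $\mathcal{X}=(X,R,v)$ as the topological model constructed in the proof of Theorem \ref{thm:Compact Structures}, namely $X$ endowed with the topology $\tau$ generated by the validity sets $\{\llbracket\phi\rrbracket \mid \phi \in \mathcal{L}_{\Phi}\}$, with $R$ and $v$ unchanged. The inverse $G$ simply forgets the topology, returning a saturated Kripke structure by Theorem \ref{thm:Compact Structures} (2)$\Rightarrow$(1). Both assignments act as the identity on the underlying functions of morphisms, so that $G\circ F = \mathrm{Id}$ and $F\circ G = \mathrm{Id}$ on objects is a direct consequence of the constructions.

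The substance of the proof is to show that $F$ is well-defined on morphisms, i.e.\ that every Kripke-homomorphism $\varphi : \mathcal{X} \to \mathcal{Y}$ between saturated structures is automatically continuous with respect to the canonical topologies on both sides. By the subbase criterion, it is enough to verify $\varphi^{-1}(\llbracket\psi\rrbracket_{\mathcal{Y}}) \in \tau_{\mathcal{X}}$ for each $\psi \in \mathcal{L}_{\Phi}$. Since the graph of $\varphi$ is a bisimulation by Definition \ref{def:Homomorphism}, Lemma \ref{lem:Bisimilar_implies_logical_equivalence} yields $x \Vdash \psi \iff \varphi(x) \Vdash \psi$ for every $x \in X$, so that $\varphi^{-1}(\llbracket\psi\rrbracket_{\mathcal{Y}}) = \llbracket\psi\rrbracket_{\mathcal{X}}$, which is a subbasic open set of $\tau_{\mathcal{X}}$. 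Functoriality of $G$ is immediate: forgetting continuity preserves identities and composition of underlying maps.

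I expect the main obstacle to be precisely this continuity-from-logical-preservation step; everything else reduces to bookkeeping. Preservation of identities and composition by both $F$ and $G$ is automatic because both act as the identity on underlying functions, and the triangle of the three categories in the corollary closes by composing $F$ (respectively $G$) with the identification of topological models and Vietoris coalgebras supplied by Theorem \ref{thm:Vietoris=00003DTopMod}.
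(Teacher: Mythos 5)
Your proposal takes essentially the same route as the paper: the paper also builds $F$ via the logic-generated topology from Theorem \ref{thm:Compact Structures}, takes $G$ to be the forgetful functor, closes the triangle with Vietoris coalgebras via Theorem \ref{thm:Vietoris=00003DTopMod}, and justifies functoriality on morphisms by the fact that Kripke homomorphisms between saturated structures preserve and reflect modal formulae (Lemma \ref{lem:Bisimilar_implies_logical_equivalence}) while the topologies are generated by validity sets. Your explicit subbase computation $\varphi^{-1}(\llbracket\psi\rrbracket_{\mathcal{Y}})=\llbracket\psi\rrbracket_{\mathcal{X}}$ merely spells out the continuity argument the paper states in one line, and your treatment of the object-level inverses is at the same level of detail as the paper's.
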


\section{Closure of Vietoris structures}

In those topological spaces where each point has a countable base
for its neighbourhoods, such as, for instance, in metric spaces, continuity
can be conveniently dealt with in terms of convergent sequences $(x_{n})_{n\in\mathbb{N}}.$
For general spaces $\mathcal{X}=(X,\tau)$, this intuitive approach
is not sufficient, but its spirit and its power can be salvaged if
one allows the linearly ordered set $\mathbb{\text{\ensuremath{\mathbb{N}}}}$,
indexing a sequence, to be replaced by arbitrary directed sets $I$
indexing the elements $(x_{i})_{i\in I}$ of a net. Often, a proof
based on convergence of sequences can be easily generalized by replacing
sequences with nets. Therefore net convergence can be considered more
intuitive than the equally powerful notion of filter convergence.
The following definitions and results on nets in general topological
spaces will be needed. They can be found as a series of exercises
in Munkres \cite{Munkres}.

\subsection{Nets and subnets}

A partially ordered set $(I,\le)$ is called \emph{directed}, if for
each pair $i_{1},i_{2}\in I$ there is some $i\in I$ such that $i_{1}\le i$
and $i_{2}\le i,$ i.e. $i$ is an upper bound for $\{i_{1},i_{2}\}.$
It follows that each finite subset $I_{0}\subseteq I$ has a common
upper bound.
\begin{definition}
A subset $J\subseteq I$ is called \emph{cofinal} in $I$, if for
each $i\in I$ there is some $j\in J$ with $i\le j.$ A map $f:J\to I$
between ordered sets $(J,\le)$ and $(I,\le)$ is called \emph{cofinal}
if its image $f[J]$ is cofinal in $I.$ 
\end{definition}

Clearly, if $J_{1}$ is cofinal in $J_{2}$ and $J_{2}$ cofinal in
$I$ then $J_{1}$ is cofinal in $I$. Also, compositions of cofinal
maps are cofinal.

Let $\mathcal{X}=(X,\tau)$ be a topological space and $x\in X$.
By $\mathfrak{U}(x)$ we denote the collection of all open neighborhoods
of $x.$ Observe that $\mathfrak{U}(x)$, when ordered by reverse
inclusion, is a directed set. 
\begin{definition}
A \emph{net} in $X$ is a map $\sigma:I\to X$ from a directed set
$I$ to the set $X.$ 
\end{definition}

If $\sigma(i)=x_{i}$, then one often denotes the net $\sigma$ as
$(x_{i})_{i\in I}$ and if $I$ is clear from the context one simply
writes $(x_{i}).$

The net $(x_{i})_{i\in I}$ \emph{converges} to $x\in X$ and we shall
write $\xymatrix{(x_{i})_{i\in I}\ar@{-^{>}}[r] & x,}
$ or when $I$ is understood, simply $\xymatrix{(x_{i}\ar@{-^{>}}[r] & x),}
$ provided that 
\begin{equation}
\forall U\in\mathfrak{U}(x).\,\exists i_{U}\in I.\,\forall i\ge i_{U}.\,x_{i}\in U.\label{eq:convergence-1}
\end{equation}
 In this case, $x$ is called a \emph{limit point} of $(x_{i})_{i\in I}.$
Colloquially, condition \ref{eq:convergence-1} can be expressed as
``$(x_{i})$\emph{ is eventually in every neighborhood of} $x".$

Limit points need not exist, nor need they be unique, unless $X$
is Hausdorff. In any case though, one has (see \cite{Munkres}):
\begin{proposition}
\label{continuity and closure with nets} Let $\mathcal{X}$ and $\mathcal{Y}$
be arbitrary topological spaces.
\end{proposition}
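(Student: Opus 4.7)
The plan is to prove the two standard net-theoretic characterizations implicit in a proposition titled \textquotedblleft continuity and closure with nets\textquotedblright: first, that $x \in \overline{A}$ iff some net in $A$ converges to $x$; and second, that $f\colon \mathcal{X}\to\mathcal{Y}$ is continuous at $x$ iff $f$ preserves convergence of nets at $x$. Both arguments rest on the same construction: using the neighbourhood filter $\mathfrak{U}(x)$ ordered by reverse inclusion as an index set for a test net.

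For the closure statement, the easy direction is to take any net $(a_i)_{i\in I}$ in $A$ with $(a_i)\to x$, observe that every $U\in\mathfrak{U}(x)$ eventually contains $a_i$, hence $U\cap A\neq\emptyset$, and conclude $x\in\overline{A}$. For the reverse direction I would let $I:=\mathfrak{U}(x)$ ordered by reverse inclusion and, for each $U\in I$, use $x\in\overline{A}$ to choose $a_U\in U\cap A$. Directedness of $I$ is immediate from closure of $\mathfrak{U}(x)$ under finite intersection, and convergence $(a_U)\to x$ follows because for any given $V\in\mathfrak{U}(x)$, whenever $U\le V$ in $I$ (that is, $U\subseteq V$) we have $a_U\in U\subseteq V$.

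For the continuity statement, one direction is routine: if $f$ is continuous at $x$ and $(x_i)\to x$, then for any open $V\ni f(x)$ the preimage $f^{-1}(V)$ is an open neighbourhood of $x$, so $(x_i)$ is eventually in $f^{-1}(V)$ and therefore $(f(x_i))$ is eventually in $V$. The contrapositive of the converse is where I would again invoke the $\mathfrak{U}(x)$-construction: if $f$ fails to be continuous at $x$, fix an open $V\ni f(x)$ with $f^{-1}(V)$ not a neighbourhood of $x$; then for each $U\in\mathfrak{U}(x)$ one can pick $x_U\in U\setminus f^{-1}(V)$, yielding a net $(x_U)_{U\in\mathfrak{U}(x)}\to x$ whose image $(f(x_U))$ stays outside $V$ and hence cannot converge to $f(x)$.

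Neither step is technically hard once the right index set is chosen. The only real subtlety, and the one worth flagging, is making sure the chosen directed set truly gives convergence: the reverse-inclusion order on $\mathfrak{U}(x)$ is directed precisely because open neighbourhoods are closed under finite intersection, and it is this property (not any countability assumption) that makes the argument work in a fully general topological space, where sequences would be insufficient. Everything else reduces to unfolding the definitions of convergence and of the neighbourhood filter.
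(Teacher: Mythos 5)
Your proof is correct: both directions of the closure characterization and both directions of the continuity characterization are established by the standard device of indexing a net by the neighbourhood filter $\mathfrak{U}(x)$ under reverse inclusion, and your attention to directedness via finite intersections is exactly the point that makes the argument work without countability assumptions. The paper itself gives no proof of this proposition, citing it as a series of exercises in Munkres; your argument is precisely the standard one intended there, so there is nothing to reconcile.
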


\begin{enumerate}
\item A map $\varphi:\mathcal{X}\to\mathcal{Y}$ is continuous at $x$ if
and only if it ``preserves convergence'', i.e. for all nets $(x_{i})_{i\in I}$
in $X:$
\[
\xymatrix{(x_{i}\ar@{-^{>}}[r] & x)}
\implies\xymatrix{(\varphi(x_{i})\ar@{-^{>}}[r] & \varphi(x))}
.
\]
\item Given a subset $A\subseteq X$, then $x\in X$ belongs to the topological
closure $\overline{A}$ of $A$ if and only if some net $(a_{i})$
in $A$ converges to $x.$ Thus, $A$ is closed iff it contains all
limit points of nets in $A.$ 
\end{enumerate}
$x\in X$ is called an \emph{accumulation point of the net $(x_{i})_{i\in I}$
}if\emph{ 
\begin{equation}
\forall U\in\mathfrak{U}(x).\,\forall i\in I.\,\exists j\ge i.\,x_{j}\in U.\label{eq:accumulation}
\end{equation}
}Condition (\ref{eq:accumulation}) can be phrased as: ``$x_{i}$
\emph{is frequently in every neighborhood of} $x".$%
{} A characterization of compactness using nets is (\cite{Munkres}):
\begin{lemma}
\emph{A subset $A\subseteq X$ is compact if and only if every net
in $A$ has an accumulation point in $A$.}
\end{lemma}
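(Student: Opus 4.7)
The plan is to establish both directions separately, using the finite-intersection-property characterization of compactness on one side and an open-cover argument on the other.

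For the forward direction, I would assume $A$ is compact and let $(x_i)_{i\in I}$ be a net in $A$. For each $i\in I$, form the tail set $T_i := \overline{\{x_j : j\geq i\}}\cap A$, which is closed in $A$. Because $I$ is directed, any finite subfamily $T_{i_1},\dots,T_{i_n}$ dominates a common $T_i$ (take $i$ to be an upper bound of $i_1,\dots,i_n$ in $I$), so the family $\{T_i\}_{i\in I}$ has the finite intersection property. Compactness of $A$ (in its standard form dual to FIP) then yields some $x\in\bigcap_{i\in I}T_i\subseteq A$. Unwinding the definition of closure, for every $U\in\mathfrak{U}(x)$ and every $i\in I$, the neighborhood $U$ meets $\{x_j : j\geq i\}$, which is exactly the accumulation condition (\ref{eq:accumulation}).

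For the converse I would argue by contradiction. Assume every net in $A$ accumulates in $A$ but some open cover $\{U_\alpha\}_{\alpha\in\Lambda}$ of $A$ admits no finite subcover. Let $I$ be the collection of finite subsets of $\Lambda$ directed by inclusion. For each $F\in I$, the set $A\setminus\bigcup_{\alpha\in F}U_\alpha$ is nonempty by the no-finite-subcover assumption, so pick $x_F$ inside it. The hypothesis furnishes an accumulation point $x\in A$ of the net $(x_F)_{F\in I}$, and since the $U_\alpha$ cover $A$ there is $\alpha_0\in\Lambda$ with $x\in U_{\alpha_0}$. Applying (\ref{eq:accumulation}) to $U:=U_{\alpha_0}$ and the index $\{\alpha_0\}\in I$ gives some $F\supseteq\{\alpha_0\}$ with $x_F\in U_{\alpha_0}$; but $x_F\notin\bigcup_{\alpha\in F}U_\alpha\supseteq U_{\alpha_0}$ by construction, a contradiction.

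The main obstacle is not conceptual but combinatorial: one must choose the right directed indexing in each direction. In the forward direction directedness of $I$ is precisely what upgrades ``every neighborhood is met by some tail'' into the FIP. In the converse, indexing over finite subfamilies of the cover (rather than individual members) is what lets ``no finite subcover'' produce a genuine net whose would-be accumulation point leads to a contradiction. Once these indexings are in place, the remaining work is routine neighborhood bookkeeping.
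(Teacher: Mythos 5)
Your proof is correct. The paper does not prove this lemma itself (it is quoted as a standard fact from Munkres), so there is no in-paper argument to compare against; your two directions --- the tail-set closures $T_i=\overline{\{x_j : j\ge i\}}\cap A$ with the finite intersection property for one implication, and the net indexed by finite subfamilies of a cover lacking a finite subcover for the converse --- constitute precisely the standard textbook proof, with the directedness bookkeeping and the final contradiction handled correctly.
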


\begin{definition}
A net $\lambda:J\to X$ is a \emph{subnet} of $\sigma:I\to X$ if
there is a monotonic and cofinal map $f:J\to I$ with $\lambda=\sigma\circ f$:
\end{definition}

\[
\xymatrix{I\ar[r]^{\sigma} & X\\
J\ar@{-->}[u]^{f}\ar[ur]_{\lambda}
}
\]

Thus, if $\sigma=(x_{i})_{i\in I}$ then $\lambda=(x_{f(j)})_{j\in J}.$
One easily checks that the subnet relation is reflexive and transitive,
but mainly:
\begin{lemma}
\label{lem:subnet converges too=000023}If ($x_{i})_{i\in I}$ converges
to $x$ then so does each subnet $(x_{f(j)})_{j\in J}$.
\end{lemma}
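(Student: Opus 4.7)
The plan is to verify the convergence condition for the subnet directly from the definitions, using cofinality and monotonicity of the reindexing map $f : J \to I$ to transport the eventuality-threshold from $I$ into $J$.

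First I would fix an arbitrary open neighborhood $U \in \mathfrak{U}(x)$. Since $(x_i)_{i \in I}$ converges to $x$, the convergence condition (\ref{eq:convergence-1}) supplies an index $i_U \in I$ such that $x_i \in U$ for every $i \geq i_U$. The task then reduces to producing some $j_U \in J$ such that $x_{f(j)} \in U$ for all $j \geq j_U$, i.e.\ such that $f(j) \geq i_U$ for every $j \geq j_U$.

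Next I would use cofinality of $f$ to handle the transfer from $I$ to $J$: because $f[J]$ is cofinal in $I$, there exists some $j_U \in J$ with $f(j_U) \geq i_U$. Monotonicity of $f$ then ensures that for every $j \geq j_U$ in $J$ we have $f(j) \geq f(j_U) \geq i_U$, and therefore $x_{f(j)} \in U$ by the choice of $i_U$. Since $U$ was arbitrary, this establishes $(x_{f(j)})_{j \in J} \to x$.

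There is no real obstacle here; the statement is essentially a bookkeeping exercise, and the two features of subnets built into the definition, namely monotonicity and cofinality of $f$, are precisely tailored so that thresholds of eventuality pull back from $I$ to $J$. The mild subtlety worth flagging is only that one uses both properties in combination: cofinality alone would give a single $j$ with $f(j) \geq i_U$, but without monotonicity one could not guarantee that all later indices $j' \geq j$ still satisfy $f(j') \geq i_U$.
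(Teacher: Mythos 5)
Your proof is correct and is precisely the standard argument (the paper itself gives no proof, deferring to the exercises in Munkres): pull the eventuality threshold $i_U$ back along $f$ using cofinality to get $j_U$, and use monotonicity to ensure all $j \ge j_U$ satisfy $f(j) \ge i_U$. Nothing further is needed.
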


\begin{lemma}
$x\in X$\emph{ is an accumulation point of the net }$\sigma:I\to X$
\emph{if and only if there is a subnet} $\lambda$ \emph{of }$\sigma$
\emph{converging to $x.$}
\end{lemma}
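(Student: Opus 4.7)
The plan is to prove each direction separately, with the forward direction being the main construction.

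For the forward direction, assume $x$ is an accumulation point of $\sigma = (x_i)_{i \in I}$. The natural subnet to build indexes on pairs capturing both ``how far along $I$ we are'' and ``how fine a neighborhood of $x$ we are looking at''. Concretely, I would set
\[
J := \{(i,U) \in I \times \mathfrak{U}(x) \mid x_i \in U\}
\]
ordered by $(i,U) \le (i',U')$ iff $i \le i'$ and $U' \subseteq U$, and define $f\colon J \to I$ by $f(i,U) := i$. Directedness of $J$ is where the accumulation hypothesis is used: given $(i_1,U_1)$ and $(i_2,U_2)$, pick a common upper bound $i_0$ of $i_1,i_2$ in $I$ and form $U := U_1 \cap U_2 \in \mathfrak{U}(x)$; condition (\ref{eq:accumulation}) then supplies some $i \ge i_0$ with $x_i \in U$, so $(i,U) \in J$ dominates both. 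Monotonicity of $f$ is immediate, and cofinality follows analogously: given $i_0 \in I$ and any $U \in \mathfrak{U}(x)$, accumulation yields $i \ge i_0$ with $x_i \in U$, so $(i,U) \in J$ and $f(i,U) \ge i_0$. Hence $\lambda := \sigma \circ f$ is a genuine subnet of $\sigma$.

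To see $\lambda \to x$, fix $V \in \mathfrak{U}(x)$. Accumulation provides some $i_V \in I$ with $x_{i_V} \in V$, so $(i_V,V) \in J$. For any $(i,U) \ge (i_V,V)$ in $J$ we have $U \subseteq V$ and $x_i \in U$, hence $\lambda(i,U) = x_i \in V$. This establishes the required convergence.

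The backward direction is a routine unpacking. Suppose $\lambda = \sigma \circ f$ is a subnet with $f\colon J \to I$ monotone and cofinal, and $\lambda \to x$. Given $U \in \mathfrak{U}(x)$ and $i_0 \in I$, choose $j_0 \in J$ with $f(j_0) \ge i_0$ (cofinality), and $j_1 \in J$ such that $\lambda(j) \in U$ for all $j \ge j_1$ (convergence). Pick $j \ge j_0, j_1$ in $J$ using directedness. Then monotonicity gives $f(j) \ge f(j_0) \ge i_0$, and $x_{f(j)} = \lambda(j) \in U$, verifying condition (\ref{eq:accumulation}) with the witness $f(j)$.

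The main obstacle is purely bookkeeping in the forward direction: one must verify that the index set $J$ is nonempty and directed, and that the cofinality of $f$ really follows from the accumulation condition rather than being assumed. Everything else reduces to applying the already-stated definitions of subnet, convergence and accumulation.
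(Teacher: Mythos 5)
Your proof is correct. The paper itself gives no proof of this lemma---it is one of the net-theoretic facts deferred to the exercises in Munkres \cite{Munkres}---so there is nothing to diverge from; your argument is the standard one, and the pair-indexed set $J=\{(i,U)\mid x_i\in U\}$ with order $(i,U)\le(i',U')\iff i\le i'\wedge U'\subseteq U$ is exactly the construction the paper itself uses later in the proof of Lemma \ref{lem:from K to a}, so your route is fully consistent with the paper's toolkit. The only point worth making explicit (you flag it but do not carry it out) is that $J$ is nonempty, which follows immediately from the accumulation condition applied to any $i\in I$ and $U=X$.
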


\begin{corollary}
\emph{A subset $A\subseteq X$ is compact iff every net in $A$ has
a subnet converging to some $a\in A.$}
\end{corollary}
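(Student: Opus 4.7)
The plan is to derive this corollary as a straightforward composition of the two immediately preceding lemmas, treating it essentially as a chain of equivalences that cascade into the desired statement.

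First I would invoke the lemma characterizing compactness in terms of nets: $A \subseteq X$ is compact if and only if every net in $A$ has an accumulation point in $A$. This reduces the corollary to showing that ``every net in $A$ has an accumulation point in $A$'' is equivalent to ``every net in $A$ has a subnet converging to some $a \in A$''. For this equivalence, I would appeal to the other preceding lemma, which states that $a \in X$ is an accumulation point of a net $\sigma$ precisely when some subnet of $\sigma$ converges to $a$.

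Chaining these, if $A$ is compact and $\sigma : I \to A$ is a net in $A$, then $\sigma$ has an accumulation point $a \in A$, and by the subnet lemma this $a$ is the limit of some subnet of $\sigma$; since $a \in A$, we get the desired convergent subnet. Conversely, if every net in $A$ admits a subnet converging to a point of $A$, then the limit of that subnet is, again by the subnet lemma, an accumulation point of the original net lying in $A$, so the compactness criterion is met and $A$ is compact.

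There is no real obstacle here, since both implications are one-line applications of the cited lemmas; the only thing to note is that a subnet of a net in $A$ is itself a net in $A$ (its image lies in $A$ because the index map only reindexes), so the ambient set $A$ is preserved under subnet formation and the limit point it produces is automatically in $A$ when required. Thus the proof reduces to a clean ``compactness $\iff$ accumulation $\iff$ convergent subnet'' argument.
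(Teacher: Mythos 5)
Your proof is correct and follows exactly the route the paper intends: the corollary is stated there without proof, as an immediate consequence of chaining the net characterization of compactness with the lemma identifying accumulation points with limits of subnets, which is precisely your argument.
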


\subsection{Convergence in Vietoris spaces}

In this section, we prepare our main result on net convergence in
Vietoris spaces. Let $\mathcal{X}=(X,\tau)$ be a topological space.
Recall that the \emph{Vietoris space }$\mathbb{V}(\mathcal{X})$ over
$\mathcal{X}$ consists of all compact subsets $K\subseteq X$, with
a topology generated by a subbase consisting of all sets

\vskip -0.4cm

\begin{eqnarray*}
\langle O\rangle & := & \{K\in\mathbb{V}(\mathcal{X})\mid K\cap O\ne\emptyset\}\\{}
[O] & := & \{K\in\mathbb{V}(\mathcal{X})\mid K\subseteq O]
\end{eqnarray*}
where $O$ ranges over all open subsets of $\mathcal{X}.$ The following
results establish the relevant connections between convergence in
$\mathbb{V}(\mathcal{X})$ and convergence in $\mathcal{X}=(X,\tau)$. 

\begin{lemma}
\label{lem:nonempty}Let $\kappa:I\to\mathbb{V}(\mathcal{X})$ be
a net in the Vietoris space. If $(\xymatrix{\kappa_{i}\ar@{-^{>}}[r] & K}
)$ and $K\ne\emptyset$, then $\kappa$ has a subnet, each member of
which is nonempty.
\end{lemma}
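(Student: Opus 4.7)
The plan is to exhibit a single subbasic open neighborhood of $K$ in $\mathbb{V}(\mathcal{X})$ whose members are exactly the nonempty compact subsets of $X$, and then to read off a subnet from the defining convergence clause.

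First, I would note that the whole space $X$ is itself open, so $\langle X\rangle$ is a subbasic open set in the Vietoris topology. By its very definition,
\[
\langle X\rangle \;=\; \{L\in\mathbb{V}(\mathcal{X})\mid L\cap X\ne\emptyset\} \;=\; \{L\in\mathbb{V}(\mathcal{X})\mid L\ne\emptyset\}.
\]
Because $K\ne\emptyset$ by hypothesis, we have $K\in\langle X\rangle$, so $\langle X\rangle$ is an open neighborhood of $K$ in $\mathbb{V}(\mathcal{X})$.

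Next, the convergence $\kappa_i\to K$ applied to the neighborhood $\langle X\rangle$ yields an index $i_0\in I$ such that $\kappa_i\in\langle X\rangle$ for every $i\ge i_0$; that is, $\kappa_i\ne\emptyset$ whenever $i\ge i_0$. I would then take $J:=\{i\in I\mid i\ge i_0\}$ ordered by the restriction of $\le$, with $f:J\hookrightarrow I$ the inclusion. The set $J$ is directed (for $i_1,i_2\in J$ an upper bound $i$ in $I$ automatically satisfies $i\ge i_0$), the inclusion is monotonic, and it is cofinal because for any $i\in I$ directedness of $I$ furnishes some $j\ge i,i_0$, which lies in $J$. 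Hence $\lambda:=\kappa\circ f$ is a subnet of $\kappa$, and by construction every $\lambda_j=\kappa_{f(j)}$ is nonempty.

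There is no real obstacle here: the only thing to be careful about is recognizing that $\langle X\rangle$ is a legitimate subbasic open set (using $O=X\in\tau$), which immediately turns ``nonemptiness'' into an open condition in the Vietoris topology and reduces the statement to the definition of net convergence.
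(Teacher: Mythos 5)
Your proposal is correct and follows essentially the same route as the paper: observe that $\langle X\rangle$ is an open neighborhood of $K$ consisting exactly of the nonempty compact sets, use convergence to find $i_0$ with $\kappa_i\in\langle X\rangle$ for $i\ge i_0$, and restrict to the cofinal tail $\{i\in I\mid i\ge i_0\}$ via the inclusion map to obtain the desired subnet. Your verification of directedness and cofinality of the tail is slightly more explicit than the paper's, but the argument is the same.
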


\begin{proof}
Since $K\ne\emptyset$, we have $K\in\langle X\rangle$, so $\langle X\rangle$
is a neighborhood of $K$ in $\mathbb{V}(\mathcal{X})$. As $\kappa$
converges to $K$, there must be some $i_{0}\in I$ such that $\forall i\ge i_{0}.\,\kappa_{i}\in\langle X\rangle,$
i.e $\forall i\ge i_{0}.\,\kappa_{i}\ne\emptyset.$ Put $J=(\{i\in I\mid i\ge i_{0}\},\le)$
and let $f:J\hookrightarrow I$ be the natural inclusion, then $f$
is clearly monotonic and cofinal. Therefore $\tau:=\kappa\circ f$
is a subnet of $\kappa$ and $\tau_{j}=\kappa_{f(j)}=\kappa_{j}\ne\emptyset,$
owing to $j\in J.$
\end{proof}

\begin{lemma}
\label{lem:K convergent forces elements convergent}Given a net $\kappa:I\to\mathbb{V}(\mathcal{X})$
converging to $K\in\mathbb{V}(\mathcal{X})$ and $b_{i}\in\kappa_{i}$
~for each $i\in I$. Then the net $(b_{i})_{i\in I}$ has a subnet
converging to some $b\in K$.
\end{lemma}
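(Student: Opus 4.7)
The plan is to produce the required subnet by exhibiting an accumulation point $b$ of the net $(b_i)_{i\in I}$ inside $K$; the conclusion then follows immediately from the earlier lemma characterising accumulation points as limits of subnets.

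Before the main argument, I would dispense with a degenerate case: the assumption that $b_i \in \kappa_i$ for every $i \in I$ forces $K \ne \emptyset$. Indeed, the empty set is open in $\mathcal{X}$, so $[\emptyset] = \{\emptyset\}$ is a Vietoris-open neighbourhood of $K$ whenever $K = \emptyset$; convergence $\kappa \to K$ would then force $\kappa_i = \emptyset$ eventually, contradicting the existence of $b_i \in \kappa_i$.

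The main step is a proof by contradiction. Suppose no point of $K$ is an accumulation point of $(b_i)$. Then for each $b \in K$ there exist an open neighbourhood $U_b \in \mathfrak{U}(b)$ and an index $i_b \in I$ such that $b_j \notin U_b$ for all $j \ge i_b$. Since $K$ is compact, the cover $\{U_b\}_{b\in K}$ admits a finite subcover $U_{b_1},\dots,U_{b_n}$; set $U := U_{b_1}\cup\cdots\cup U_{b_n}$, so that $K \subseteq U$ and therefore $[U]$ is a Vietoris-neighbourhood of $K$. Using directedness of $I$ together with the convergences $\kappa \to K$ (applied to $[U]$), I can pick $i^{*} \in I$ with $i^{*} \ge i_{b_1},\dots,i_{b_n}$ and $\kappa_i \subseteq U$ for every $i \ge i^{*}$. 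Then $b_{i^{*}} \in \kappa_{i^{*}} \subseteq U$ lies in some $U_{b_k}$, while $i^{*} \ge i_{b_k}$ rules out exactly this; contradiction.

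The heart of the argument — and the step where I expect the real work to sit — is the interlocking of two different compactness-style properties: the compactness of $K$ in $\mathcal{X}$, which converts pointwise avoidance of the $U_b$ into avoidance of a single finite union $U$; and the role of the Vietoris subbasic sets $[U]$, which convert the inclusion $K \subseteq U$ into the eventual inclusion $\kappa_i \subseteq U$ along the net. Once these two are combined, the remainder is a routine manipulation of directed indices, and the passage from ``$(b_i)$ has an accumulation point in $K$'' to ``some subnet converges to a point of $K$'' is immediate from the lemma cited above.
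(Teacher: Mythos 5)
Your proof is correct and follows essentially the same argument as the paper: assume no point of $K$ is an accumulation point of $(b_i)$, use compactness of $K$ to extract a finite union $U$ of avoiding neighbourhoods, and play the Vietoris neighbourhood $[U]$ of $K$ against the indices $i_{b_k}$ to reach a contradiction, after which the subnet is obtained from the accumulation-point lemma. Your explicit treatment of the case $K=\emptyset$ is a harmless extra precaution not present in the paper.
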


\begin{proof}
It is enough to show that $(b_{i})_{i\in I}$ has an accumulation
point $b\in K.$ For then we obtain a subnet $(b_{f(j)})_{j\in J}$
converging to $b$. By Lemma \ref{lem:subnet converges too=000023},
the subnet $(\kappa_{f(j)})_{j\in J}$ of $\kappa$ still converges
to $K.$

For every $x\in K$ which is not an accumulation point of $(b_{i})_{i\in I}$,
we obtain by negating (\ref{eq:accumulation}) an open neighborhood
$U_{x}$ of $x$ and an $i_{x}\in I$ such that for all $i\ge i_{x}$
we have $b_{i}\not\in U_{x}.$ Assuming that no $x\in K$ is an accumulation
point, then the family $(U_{x})_{x\in K}$ forms an open cover of
$K.$ By compactness, there is a finite subcover $U=U_{x_{1}}\cup...\cup U_{x_{n}}$.
Choose $i_{U}\ge i_{x_{1}},...,i_{x_{n}}$, then for every $i\ge i_{U}$
we have $b_{i}\not\in U\supseteq K.$

But $[U]$ is also an open neighborhood of $K$ in $\mathbb{V}(\mathcal{X})$
and $\xymatrix{(\kappa_{i}\ar@{-^{>}}[r] & K),}
$ so there exists $i_{[U]}$ with $\kappa_{i}\in[U]$, that is $b_{i}\in\kappa_{i}\subseteq U$
for $i\ge i_{[U]}$. For $i\ge\{i_{U},i_{[U]}\}$ we enter the contradiction
$b_{i}\in U$ and $b_{i}\not\in U$.
\end{proof}

\begin{lemma}
\label{lem:from K to a}Given a net $\kappa:I\to\mathbb{V}(\mathcal{X})$
converging to $K\in\mathbb{V}(\mathcal{X})$ and $a\in K$. Then there
is a subnet $(\kappa_{j})_{j\in J}$ and elements $a_{j}\in\kappa_{j}$
converging to $a.$
\end{lemma}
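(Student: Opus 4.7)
My plan is to build the subnet by indexing over pairs consisting of an index $i \in I$ and an open neighborhood of $a$, so that the parameter $U$ forces the chosen points to be increasingly close to $a$ while the parameter $i$ forces cofinality in $I$.

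Concretely, for $U \in \mathfrak{U}(a)$ the set $\langle U \rangle$ is an open neighborhood of $K$ in $\mathbb{V}(\mathcal{X})$ because $a \in K \cap U$. Since $\kappa \to K$, there is some $i_U \in I$ such that $\kappa_i \cap U \ne \emptyset$ for every $i \ge i_U$. This is the key consequence of convergence I will exploit throughout.

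Next I define
\[
J := \{(i, U) \in I \times \mathfrak{U}(a) \mid \kappa_i \cap U \ne \emptyset\},
\]
ordered by $(i_1, U_1) \le (i_2, U_2)$ iff $i_1 \le i_2$ and $U_2 \subseteq U_1$. I would first check that $(J, \le)$ is directed: given $(i_1, U_1), (i_2, U_2) \in J$, choose an upper bound $i'$ of $i_1, i_2$ in $I$ together with $U := U_1 \cap U_2 \in \mathfrak{U}(a)$; by the observation above, there is some $i \ge i', i_U$ with $\kappa_i \cap U \ne \emptyset$, and then $(i, U) \in J$ dominates both $(i_1, U_1)$ and $(i_2, U_2)$. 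The projection $f : J \to I$, $f(i, U) := i$, is obviously monotonic, and it is cofinal because, given $i_0 \in I$ and any fixed $U_0 \in \mathfrak{U}(a)$, one can find $i \ge i_0, i_{U_0}$ with $\kappa_i \cap U_0 \ne \emptyset$, so $(i, U_0) \in J$ with $f(i, U_0) \ge i_0$. Hence $\kappa \circ f$ is a subnet of $\kappa$.

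Using the axiom of choice, for each $(i, U) \in J$ pick $a_{(i,U)} \in \kappa_i \cap U$; this lies in $\kappa_{f(i,U)}$, as required. To verify convergence $(a_{(i,U)})_{(i,U) \in J} \to a$, let $V \in \mathfrak{U}(a)$ and choose (as above) some $i \in I$ with $(i, V) \in J$. For every $(i', U') \ge (i, V)$ we have $U' \subseteq V$, hence $a_{(i',U')} \in U' \subseteq V$, which is exactly eventual membership in $V$. Thus the subnet $(\kappa_{f(i,U)})_{(i,U) \in J}$ together with the chosen points $a_{(i,U)} \in \kappa_{f(i,U)}$ converges to $a$. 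The only real subtlety, which is where I expect to take a moment of care, is the interaction of the two coordinates in the order on $J$: one must use $U_1 \cap U_2$ as the \emph{neighbourhood} coordinate of the upper bound so that the resulting element sits in both $U_1$ and $U_2$, while the intersection operation in $I$ is replaced by taking a common upper bound above the threshold index $i_{U_1 \cap U_2}$.
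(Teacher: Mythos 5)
Your proof is correct and follows essentially the same route as the paper's: the same index set of pairs $(i,U)\in I\times\mathfrak{U}(a)$ with $\kappa_i\cap U\ne\emptyset$, the same order, the same choice of points $a_{(i,U)}\in\kappa_i\cap U$, and the same verification of monotonicity, cofinality, directedness and convergence. The only (harmless) difference is that you obtain cofinality of the projection directly from the convergence of $\kappa$ to $K$, whereas the paper first reduces to a subnet with nonempty members via its Lemma \ref{lem:nonempty}.
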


\begin{proof}
By Lemma \ref{lem:nonempty} and Lemma \ref{lem:subnet converges too=000023},
we may assume that $\kappa_{i}\ne\emptyset$ for all $i\in\mathcal{I}.$

For every $U\in\mathfrak{U}(a)$ we have $a\in U\cap K$, so $K\cap U\ne\emptyset,$
which means that $K\in\langle U\rangle,$ so $\langle U\rangle$ is
an open neighborhood of $K$ in $\mathbb{V}(\mathcal{X}).$

Since $\kappa$ converges to $K$ we have 
\begin{equation}
\forall U\in\mathfrak{U}(a).\exists i_{U}\in I.\forall i\ge i_{U}.\kappa_{i}\in\langle U\rangle.\label{eq:convergence}
\end{equation}
 Consider a partial order on 
\[
J:=\{(i,U)\in I\mathcal{\times\mathfrak{U}}(a)\mid\kappa_{i}\in\langle U\rangle\}
\]
 by defining: 
\[
(i_{1},U_{1})\le(i_{2},U_{2})\,:\iff i_{1}\le i_{2}\,\wedge\,U_{1}\supseteq U_{2}.
\]

To verify that $\mathcal{J}=(J,\le)$ is directed, let arbitrary $j_{1}=(i_{1},U_{1})$
and $j_{2}=(i_{2},U_{2})$ be given. Pick $U=U_{1}\cap U_{2}$ then
by (\ref{eq:convergence}) there is an $i_{U}\in I$ with $\kappa_{i}\in\langle U\rangle$
for all $i\ge i_{U}.$ It suffices to choose $i\ge i_{1},i_{2},i_{U},$
then $(i,U)\in J$ and $(i,U)\ge(i_{1},U_{1}),(i_{2},U_{2})$.

The map $\pi_{1}:J\to I$ given as $\pi_{1}(i,U):=i$ is clearly monotonic.
For each $i\in I$ we have $(i,X)\in J$ since $\kappa_{i}\ne\emptyset.$
Hence $\pi_{1}$ is cofinal. Therefore $\kappa\circ\pi_{1}:J\to\mathbb{V}(\mathcal{X})$
is a subnet of $\kappa$ and therefore also converges to $K$.

For each $(i,U)\in J$ we can pick some $a_{(i,U)}\in\kappa_{i}\cap U$.
This defines a net $(a_{j})_{j\in J}$ in $X$.

To show that $(a_{j})_{j\in J}$ converges to $a,$ let $U$ be any
open neighborhood of $a.$ By \ref{eq:convergence} there exist some
$i_{U}$ such that in particular $j_{U}:=(i_{U},U)\in J.$ We therefore
have $a_{j_{U}}:=a_{(i_{U},U)}\in U$ and for each $j=(i,U')\ge(i_{U},U)=j_{U},$
i.e. for $i\ge i_{U}$ and $U'\subseteq U$ we have $a_{j}=a_{(i,U')}\in\kappa_{i}\cap U'\subseteq U.$
\end{proof}

We can combine the previous two lemmas to a theorem relating convergence
in Vietoris spaces to convergence in their base spaces:
\begin{theorem}
\label{thm:Vietoris convergence}Let $(\kappa_{i})_{i\in I}$ converge
to $K$ in the Vietoris space $\mathbb{V}(\mathcal{X}).$ Then
\begin{enumerate}
\item for each $a\in K$ there is a subnet $(\kappa_{j})_{j\in J}$ and
elements $a_{j}\in\kappa_{j}$ such that $\xymatrix{(a_{j}\ar@{-^{>}}[r] & a),}
$ and
\item each net $(b_{i})_{i\in I}$ with $b_{i}\in\kappa_{i}$ has a subnet
$(b_{j})_{j\in J}$ converging to some $b\in K.$
\end{enumerate}
\end{theorem}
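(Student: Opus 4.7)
The plan is to observe that this theorem is essentially a packaging result: the two clauses are literally the conclusions of Lemma \ref{lem:from K to a} and Lemma \ref{lem:K convergent forces elements convergent} respectively, so almost no new work is required. I would therefore structure the proof as two short paragraphs, each invoking one of the preceding lemmas, followed by a brief sanity check that no hidden hypotheses have been glossed over.

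For clause (1), given $a\in K$, I would apply Lemma \ref{lem:from K to a} directly to the net $(\kappa_i)_{i\in I}$ and the chosen point $a$. That lemma produces exactly a subnet $(\kappa_j)_{j\in J}$ of $\kappa$ together with a selection $a_j\in\kappa_j$ such that $(a_j)_{j\in J}$ converges to $a$, which is what is claimed. The only small subtlety in that earlier proof was that one first needs $\kappa_i\ne\emptyset$ to pick the $a_j$; this was already handled inside Lemma \ref{lem:from K to a} by an appeal to Lemma \ref{lem:nonempty}, so nothing remains to be done here.

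For clause (2), given any choice $b_i\in\kappa_i$, the net $(b_i)_{i\in I}$ is exactly the type of object considered in Lemma \ref{lem:K convergent forces elements convergent}, which guarantees a subnet of $(b_i)_{i\in I}$ that converges to some $b\in K$. Again nothing needs to be re-proved; the compactness of $K$ and the trick of using neighborhoods $[U]\cap\langle U\rangle$-style sets in $\mathbb{V}(\mathcal{X})$ that derives the contradiction from a would-be finite subcover was already carried out there.

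There is no real obstacle, so I would keep the proof to essentially two sentences: \emph{Part $(1)$ is Lemma \ref{lem:from K to a}; part $(2)$ is Lemma \ref{lem:K convergent forces elements convergent}}. The only thing worth flagging is that in clause (1) one silently uses that the subnet of $\kappa$ returned by Lemma \ref{lem:from K to a} still converges to $K$ (by Lemma \ref{lem:subnet converges too=000023}), so that the conclusion is genuinely about a subnet of the original Vietoris net and not some unrelated net in $\mathbb{V}(\mathcal{X})$.
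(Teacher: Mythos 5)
Your proposal is correct and matches the paper exactly: the paper introduces this theorem as nothing more than the combination of Lemma \ref{lem:from K to a} (clause 1) and Lemma \ref{lem:K convergent forces elements convergent} (clause 2), with no further argument. Your remark that the subnet still converges to $K$ by Lemma \ref{lem:subnet converges too=000023} is a sensible, harmless addition already implicit in those lemmas.
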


\subsection{Closure of subcoalgebras and bisimulations}

In this section we shall show that in topological Kripke structures,
i.e. for Vietoris coalgebras, the topological closure of a substructure
is again a substructure and the closure of a bisimulation is a bisimulation.
The second of these results has previously been shown for Vietoris
coalgebras over Stone spaces in \cite{BezhFonVen10}, but now we work
in the more general context of Vietoris coalgebras over arbitrary
topological spaces, so we were forced to prepare our tools in the
previous sections. 
\begin{theorem}
Let $\mathcal{A}=(A,\alpha)$ be a Vietoris coalgebra. If $U\subseteq A$
is a Kripke substructure of $\mathcal{A}$, then so is its topological
closure $\overline{U}$.
\end{theorem}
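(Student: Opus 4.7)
The plan is to reduce the statement to a single closure-under-successors condition and then exploit the continuity of the structure map together with Theorem \ref{thm:Vietoris convergence}. Recall from the earlier remark after Definition \ref{def:Homomorphism} that a subset $U\subseteq A$ is a Kripke substructure of $\mathcal{A}=(A,\alpha)$ precisely when $R(u)\subseteq U$ for every $u\in U$, where $R:=\pi_{1}\circ\alpha$ is the transition component of $\alpha$. So the heart of the proof consists in verifying that $R(x)\subseteq\overline{U}$ for every $x\in\overline{U}$; the valuation part is automatic since $v$ does not constrain which points lie in the carrier.

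For the main argument, I would fix $x\in\overline{U}$ and, using part (2) of Proposition \ref{continuity and closure with nets}, choose a net $(u_{i})_{i\in I}$ in $U$ with $\xymatrix{u_{i}\ar@{-^{>}}[r] & x}$. Since $\alpha$ is continuous (being a $Top$-coalgebra structure), so is $R=\pi_{1}\circ\alpha:\mathcal{A}\to\mathbb{V}(\mathcal{A})$, hence part (1) of the same proposition gives $\xymatrix{R(u_{i})\ar@{-^{>}}[r] & R(x)}$ in the Vietoris space. Now pick an arbitrary $a\in R(x)$. Applying Theorem \ref{thm:Vietoris convergence}(1) to this convergent net in $\mathbb{V}(\mathcal{A})$ yields a subnet $(R(u_{j}))_{j\in J}$ and witnesses $a_{j}\in R(u_{j})$ with $\xymatrix{a_{j}\ar@{-^{>}}[r] & a}$. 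Because each $u_{j}\in U$ and $U$ is a substructure, $R(u_{j})\subseteq U$, so $a_{j}\in U$ for every $j\in J$. Thus $a$ is the limit of a net in $U$, whence $a\in\overline{U}$ by Proposition \ref{continuity and closure with nets}(2). Since $a\in R(x)$ was arbitrary, $R(x)\subseteq\overline{U}$.

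To conclude that $\overline{U}$ is really a Kripke substructure of the Vietoris coalgebra $\mathcal{A}$ (and not merely its algebraic reduct), I would equip $\overline{U}$ with the subspace topology. Compactness of $R(x)$ is intrinsic, so it remains compact when viewed inside $\overline{U}$. For the conditions of Definition \ref{def:TopologicalModel}, observe that the subbasic Vietoris opens $\langle O\cap\overline{U}\rangle$ and $[O\cap\overline{U}]$ in $\mathbb{V}(\overline{U})$ are exactly the traces of $\langle O\rangle$ and $[O]$ under the continuous inclusion, so the restricted structure map $\overline{U}\to\mathbb{V}(\overline{U})\times\mathbb{P}(\Phi)$ is continuous; analogously $\llbracket p\rrbracket\cap\overline{U}$ stays clopen. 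Hence $\overline{U}$ with the subspace topology is a Vietoris coalgebra, and the inclusion $\overline{U}\hookrightarrow\mathcal{A}$ is a continuous Kripke-homomorphism.

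The only nontrivial step is the passage from a convergent net in $\mathbb{V}(\mathcal{A})$ to convergent choices of points in its members, and this is precisely what Theorem \ref{thm:Vietoris convergence}(1) was prepared for: without it one would be tempted to argue pointwise, but a naive choice $a_{i}\in R(u_{i})$ need not converge to a prescribed $a\in R(x)$, and one must instead extract a suitable subnet. Once that tool is available, the argument is essentially a single diagram chase combining continuity with the net-theoretic characterization of closure.
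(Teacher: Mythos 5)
Your proof is correct and follows essentially the same route as the paper: reduce the claim to $R(x)\subseteq\overline{U}$ for $x\in\overline{U}$, take a net in $U$ converging to $x$, use continuity of $R=\pi_{1}\circ\alpha$ to get convergence $R(u_{i})\to R(x)$ in $\mathbb{V}(\mathcal{A})$, and extract convergent witnesses via Theorem \ref{thm:Vietoris convergence}. Your final paragraph on the subspace topology is extra (the paper treats ``Kripke substructure'' purely as the condition $R(u)\subseteq U$ and stops there), but it is harmless and correct.
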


\begin{proof}
We may consider $\mathcal{A}$ as a topological model $(A,R,v_{A})$
where $R(a)=(\pi_{1}\circ\alpha)(a)$ for each $a\in A$ is the compact
set of all successors of $a.$ Assume that $U$ is a subcoalgebra,
i.e. a subset $U\subseteq X$ such that $R(u)\subseteq U$ for each
$u\in U.$ We need to show that the same property holds for $\overline{U}.$

Thus let $u\in\overline{U}$ be arbitrary and let $v$ be a successor
of $u$ in the coalgebra $\mathcal{X}$, i.e. $v\in R(u)$. We need
to show that $v\in\overline{U}.$

Due to Proposition \ref{continuity and closure with nets}, there
is a net $(u_{i})_{i\in I}$ converging to $u$ with each $u_{i}\in U.$
By continuity of $\alpha,$ the net $R(u_{i})_{i\in I}$ converges
to $R(u)$ in the Vietoris topology.

As $v\in R(u)$, we may assume by Lemma \ref{lem:nonempty}, that
each $R(u_{i})$ is nonempty. \footnote{With the phrase ``we may assume'' we often hide the technicality
that we might have to pass to a subnet, such as here to $(\alpha(u_{f(j)}))_{j\in J}$
and retroactively replace $(u_{i})_{i\in I}$ by the subnet $(u_{f(j)})_{j\in J}$,
which is always justified by Lemma \ref{lem:subnet converges too=000023}.} Next, we may assume by Theorem \ref{thm:Vietoris convergence} that
we can pick a $v_{i}$ from each $R(u_{i})$ so that the net $(v_{i})_{i\in I}$
converges to $v$ in $\mathcal{X}.$

Since $U$ was a subcoalgebra, $R(u_{i})\subseteq U,$ so each $v_{i}$
must belong to $U$. Therefore, we have found a net in $U$ which
converges to $v,$ hence $v\in\overline{U}.$
\end{proof}

\begin{theorem}
If $S$ is a Kripke bisimulation between Vietoris coalgebras $\mathcal{A}=(A,\alpha)$
and $\mathcal{B}=(B,\beta)$, then so is its topological closure $\overline{S}.$
\end{theorem}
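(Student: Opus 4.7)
The plan is to transfer the bisimulation conditions pointwise from $S$ to $\overline{S}$ by a net argument, using Proposition \ref{continuity and closure with nets}(2) to represent closure points as limits and using Theorem \ref{thm:Vietoris convergence} to move between convergence in $\mathbb{V}$ and convergence in the base spaces. Let $(x,y) \in \overline{S}$ be arbitrary. By the characterization of closure, there is a net $((x_i,y_i))_{i\in I}$ in $S$ converging to $(x,y)$ in the product topology on $A\times B$, which means $x_i \to x$ in $\mathcal{A}$ and $y_i \to y$ in $\mathcal{B}$ separately.

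For the valuation condition (clause (1) of Definition \ref{def:bisimulation}), I would invoke continuity of the maps $v_A = \pi_2\circ\alpha$ and $v_B = \pi_2\circ\beta$ guaranteed by Theorem \ref{thm:Vietoris=00003DTopMod}, together with Proposition \ref{continuity and closure with nets}(1), to conclude $v_A(x_i) \to v_A(x)$ and $v_B(y_i) \to v_B(y)$ in $\mathbb{P}(\Phi)$. Since $v_A(x_i) = v_B(y_i)$ for every $i$ and the topology on $\mathbb{P}(\Phi)$ is Hausdorff, limits are unique, so $v_A(x) = v_B(y)$.

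For the forth condition (clause (2)), fix an arbitrary $x' \in R_A(x)$; I must produce $y' \in R_B(y)$ with $(x',y') \in \overline{S}$. Continuity of $R_A = \pi_1\circ\alpha$ and $R_B = \pi_1\circ\beta$ gives $R_A(x_i) \to R_A(x)$ in $\mathbb{V}(\mathcal{A})$ and $R_B(y_i) \to R_B(y)$ in $\mathbb{V}(\mathcal{B})$. Applying Theorem \ref{thm:Vietoris convergence}(1) to $x' \in R_A(x)$, I may pass to a subnet along which there exist $x'_i \in R_A(x_i)$ with $x'_i \to x'$. Because $(x_i,y_i)\in S$ and $S$ is a bisimulation, each such $x'_i$ is matched by some $y'_i \in R_B(y_i)$ with $(x'_i,y'_i)\in S$. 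I then apply Theorem \ref{thm:Vietoris convergence}(2) to the net $(y'_i)$, with $y'_i \in R_B(y_i)$, to extract a further subnet converging to some $y' \in R_B(y)$. Along this final subnet, $((x'_j, y'_j))_{j\in J}$ is a net in $S$ converging to $(x',y')$, so Proposition \ref{continuity and closure with nets}(2) yields $(x',y') \in \overline{S}$, as required. The back condition (clause (3)) is obtained by interchanging the roles of $\mathcal{A}$ and $\mathcal{B}$.

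The main obstacle is the bookkeeping of nested subnet refinements: starting from the original net in $S$, we refine once implicitly to secure nonemptiness via Lemma \ref{lem:nonempty}, again to lift $x'$ to the net $(x'_i)$, and a third time to extract a limit for $(y'_i)$. The footnote on \emph{``we may assume''} already used earlier in the paper covers exactly this technicality: by Lemma \ref{lem:subnet converges too=000023}, convergences of $x_i \to x$, $y_i \to y$, $R_A(x_i) \to R_A(x)$ and $R_B(y_i) \to R_B(y)$ are inherited by every subnet, so all hypotheses remain intact along the refinements and the argument closes cleanly.
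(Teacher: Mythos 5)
Your proposal is correct and follows essentially the same route as the paper's own proof: a net in $S$ converging to the closure point, the Hausdorff argument on $\mathbb{P}(\Phi)$ for the valuation clause, Theorem \ref{thm:Vietoris convergence} (parts (1) and (2), the latter being Lemma \ref{lem:K convergent forces elements convergent}) for lifting and matching successors, and Lemma \ref{lem:subnet converges too=000023} to keep all convergences intact through the subnet refinements. Your explicit accounting of the nested subnet passages matches what the paper hides behind its ``we may assume'' footnote.
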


\begin{proof}
Again, we consider $\mathcal{A}$ and $\mathcal{B}$ as topological
models with $\alpha=(R_{A},v_{A})$ and $\beta=(R_{B},v_{B})$. Given
$(a,b)\in\overline{S},$ we need to show that
\begin{enumerate}
\item $v_{A}(a)=v_{B}(b)$ and
\item whenever $a\rightarrowtriangle u$ then there is some $v$ with $b\rightarrowtriangle v$
and $(u,v)\in\overline{S}.$
\end{enumerate}
The third case of definition \ref{def:bisimulation} will follow by
a symmetric proof.

First note that by Theorem \ref{continuity and closure with nets}
there is a net $(a_{i},b_{i})_{i\in I}$ converging to $(a,b)$ with
each $(a_{i},b_{i})\in S.$ The individual nets $(a_{i})$, resp.
$(b_{i})$, converge to $a,$ resp. to $b$, since the projection
maps are continuous.

Also by continuity, $v_{A}(a_{i})$ and $v_{B}(b_{i})$ converge to
$v_{A}(a)$ and $v_{B}(b)\in\mathbb{P}(\Phi)$. Since $(a_{i},b_{i})\in S,$
we know $v_{A}(a_{i})=v_{B}(b_{i})$ for each $i\in I.$ Since the
topology on $\mathbb{P}(\Phi)$, the second component of the Vietoris
functor, is Hausdorff, we get $v_{A}(a)=v_{B}(b)$ as required.

Next, assume $a\rightarrowtriangle u$, i.e. $u\in R_{A}(a),$ then
we need to find some $v$ with $b\rightarrowtriangle v$ and $(u,v)\in\overline{S}.$

By continuity of $R_{A}$ and $R_{B}$, the nets $(R_{A}(a_{i}))_{i\in I}$
resp. $(R_{B}(b_{i})_{i\in I}),$ converge to $R_{A}(a),$ resp. to
$R_{B}(b)$ in the Vietoris spaces $\mathbb{V}(\mathcal{A})$, resp.
$\mathbb{V}(\mathcal{B})$.

\[
\xymatrix{a\ar@/_{2pc}/@{-|>}[dd]\ar@{|->}[d]^{R_{A}}\ar@{..}@/^{2pc}/[rrrrr]^{\overline{S}} & \, & a_{j}\ar@{-_{>}}[ll]^{lim_{\mathcal{A}}}\ar@{|->}[d]_{R_{A}}\ar@/^{0.75pc}/@{..}[r]_{S} & b_{j}\ar@{-^{>}}[rr]_{lim_{\mathcal{B}}}\ar@{|->}[d]^{R_{B}} & \, & b\ar@/^{2pc}/@{--|>}[dd]\ar@{|->}[d]_{R_{B}}\\
R_{A}(a) & \, & R_{A}(a_{j})\ar@{-_{>}}[ll]_{lim_{\mathbb{V}(\mathcal{A})}} & R_{B}(b_{j})\ar@{-^{>}}[rr]^{lim_{\mathbb{V}(\mathcal{B})}} & \, & R_{B}(b)\\
u\ar@{..}[u]|\in\ar@{..}@/_{2pc}/[rrrrr]_{\overline{S}} &  & \exists\,u_{j}\ar@{--_{>}}[ll]_{lim_{\mathcal{A}}}\ar@{..}[u]|\in\ar@/_{0.75pc}/@{..}[r]^{S} & v_{j}\ar@{..}[u]|\in\ar@{--^{>}}[rr]^{lim_{\mathcal{B}}} &  & \exists\,v\ar@{..}[u]|\in
}
\]

In the sense mentioned previously, we may assume that the $R_{A}(a_{i})$
are nonempty and further, using Theorem \ref{thm:Vietoris convergence},
and possibly passing to a subnet indexed by some $J$, we find $u_{j}\in R_{A}(a_{j})$
with $\xymatrix{(u_{j}\ar@{-^{>}}[r] & u).}
$

Since $S$ is a bisimulation and $a_{j}\,S\,b_{j}$ for each $j$
and $u_{j}\in R_{A}(a_{j})$ it follows that there are $v_{j}\in R_{B}(b_{j})$
with $(u_{j},v_{j})\in S$ for each $j\in J.$ Since $\xymatrix{(b_{j}\ar@{-^{>}}[r] & b)}
$ it follows $\xymatrix{(R_{B}(b_{j})\ar@{-^{>}}[r] & R_{B}(b))}
$ by continuity of $R_{B}$. Therefore, Lemma \ref{lem:K convergent forces elements convergent}
forces $(v_{j})_{j\in J}$ to converge to some $v\in R_{B}(b).$

Consequently, $\xymatrix{((u_{j},v_{j})\ar@{-^{>}}[r] & (u,v))}
$ where $(u_{j},v_{j})\in S$ for each $j\in J,$ hence by Lemma \ref{continuity and closure with nets}
$(u,v)\in\overline{S}$ as desired.
\end{proof}

\section{The terminal Vietoris coalgebra}

To obtain the terminal Vietoris coalgebra, we utilize the equivalence
with saturated Kripke structures and look for a terminal saturated
Kripke structure instead. This will be found in the ``canonical model''.

Recall from \cite{blackburn_rijke_venema_2001}, that the \emph{canonical
model} for a normal modal logic consists of all \emph{maximally consistent}
subsets of $\mathcal{L}_{\Phi}$. Here $u\subseteq\mathcal{L}_{\Phi}$
is called maximally consistent, if
\begin{itemize}
\item no contradiction can be derived from the formulae in $u,$ and
\item for each formula $\phi\in\mathcal{L}_{\Phi}$, either $\phi\in u$
or $\neg\phi\in u.$
\end{itemize}
Typical sets of formulas which are maximally consistent arise as 
\[
\llbracket x\rrbracket:=\{\phi\mid x\Vdash\phi\},
\]
where $x$ is any element of any Kripke structure. Moreover, any consistent
set of formulas can be extended to a maximally consistent set.

It is also essential to know that a set $u$ is consistent, if and
only if every finite subset $u_{0}\subseteq u$ is consistent, see
\cite{blackburn_rijke_venema_2001}.

The \emph{canonical model} is now defined as $\mathcal{M}:=(M,\rightarrowtriangle_{\mathcal{M}},v_{\mathcal{M}})$
where $M$ is the collection of all maximally consistent subsets of
$\mathcal{L}_{\Phi}$, and $\rightarrowtriangle_{\mathcal{M}}$ and
$v_{\mathcal{M}}$ are defined as

\begin{equation}
u\rightarrowtriangle_{\mathcal{M}}v\,:\Leftrightarrow\,\forall\phi.(\square\phi\in u\implies\phi\in v),\label{eq:def:u --> v}
\end{equation}
 and 
\begin{equation}
v_{\mathcal{M}}(u)\,:=\,u\cap\Phi.\label{eq:def v in canonival Model}
\end{equation}

The latter definition extends to the important ``truth lemma'':
\begin{lemma}
For each formula $\phi\in\mathcal{L}_{\Phi}$ and each $u\in M$
we have: 
\[
u\Vdash\phi\iff\phi\in u.
\]
\end{lemma}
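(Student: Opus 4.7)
The proof proceeds by induction on the construction of $\phi \in \mathcal{L}_\Phi$. The base case $\phi = p \in \Phi$ is immediate from the definitions: $u \Vdash p$ iff $p \in v_{\mathcal{M}}(u) = u \cap \Phi$ iff $p \in u$. The Boolean cases are routine applications of the induction hypothesis together with the defining properties of maximally consistent sets: for negation, $u \Vdash \neg\phi$ iff $u \not\Vdash \phi$ iff (by IH) $\phi \notin u$ iff $\neg\phi \in u$, where the last step uses that exactly one of $\phi$, $\neg\phi$ lies in $u$; for conjunction, the fact that $u$ is closed under derivable consequences gives $\phi_1 \wedge \phi_2 \in u$ iff $\phi_1, \phi_2 \in u$. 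Disjunction reduces via $\phi_1 \vee \phi_2 \equiv \neg(\neg\phi_1 \wedge \neg\phi_2)$.

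The interesting step is $\phi = \square\psi$. The forward direction is easy: if $\square\psi \in u$ and $u \rightarrowtriangle_{\mathcal{M}} w$, then by the defining clause \eqref{eq:def:u --> v} we have $\psi \in w$, so by IH $w \Vdash \psi$; since this holds for every $w$ with $u \rightarrowtriangle_{\mathcal{M}} w$, we get $u \Vdash \square\psi$.

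The main obstacle, and the heart of the proof, is the converse for $\square\psi$. I plan to argue contrapositively: if $\square\psi \notin u$, I must exhibit some $w \in M$ with $u \rightarrowtriangle_{\mathcal{M}} w$ and $\psi \notin w$, since then by the induction hypothesis $w \not\Vdash \psi$, giving $u \not\Vdash \square\psi$. The candidate set is
\[
\Sigma := \{\chi \mid \square\chi \in u\} \cup \{\neg\psi\}.
\]
The key claim is that $\Sigma$ is consistent. This is the standard ``existence lemma'' for the canonical model and is where normality of the logic is used: if some finite $\chi_1,\dots,\chi_n$ with $\square\chi_i \in u$ together with $\neg\psi$ were inconsistent, then $\chi_1 \wedge \cdots \wedge \chi_n \to \psi$ would be derivable; applying the necessitation rule and distributing $\square$ over the implication (via the K-axiom) yields $\square\chi_1 \wedge \cdots \wedge \square\chi_n \to \square\psi$, which together with $\square\chi_i \in u$ (and closure under consequence of $u$) forces $\square\psi \in u$, a contradiction.

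Having established consistency of $\Sigma$, Lindenbaum's lemma extends it to a maximally consistent $w \in M$. By construction $\{\chi \mid \square\chi \in u\} \subseteq w$, so $u \rightarrowtriangle_{\mathcal{M}} w$, and $\neg\psi \in w$ forces $\psi \notin w$, completing the induction. The dual case $\diamondsuit\psi$ needs no separate argument, since $\diamondsuit\psi \equiv \neg\square\neg\psi$ reduces it to previous cases.
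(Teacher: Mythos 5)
Your proof is correct: it is the standard canonical-model argument, by induction on $\phi$, with the $\square$-case handled through the Existence Lemma (consistency of $\{\chi\mid\square\chi\in u\}\cup\{\neg\psi\}$ via necessitation and the K-axiom, followed by Lindenbaum's lemma). The paper does not prove this Truth Lemma at all but cites it from the standard literature on the canonical model, and your argument is exactly the proof given there, so it matches the intended approach and fills in the details correctly.
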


As an immediate corollary, we note:
\begin{corollary}
\label{cor:u approx v implies u=00003Dv-1}$\forall u,v\in M.\,u\approx v\implies u=v$.
\end{corollary}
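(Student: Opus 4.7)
The plan is to derive the corollary directly from the truth lemma stated immediately above it. Logical equivalence $u \approx v$ means, by definition, that for every modal formula $\phi \in \mathcal{L}_\Phi$ we have $u \Vdash \phi \iff v \Vdash \phi$. Applying the truth lemma on both sides converts validity into membership: $\phi \in u \iff \phi \in v$. Since $u$ and $v$ are both subsets of $\mathcal{L}_\Phi$, extensionality of sets then yields $u = v$.

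So the proof is essentially a one-liner chaining together the three biconditionals
\[
\phi \in u \iff u \Vdash \phi \iff v \Vdash \phi \iff \phi \in v,
\]
where the outer equivalences are the truth lemma and the middle one is the hypothesis $u \approx v$. There is no real obstacle here; the only thing to check is that $u \approx v$ is indeed the symbol used for logical equivalence across (possibly different) Kripke structures, which it is according to the preliminaries, and that it applies within the canonical model $\mathcal{M}$ viewed as a single Kripke structure. No choice of witnesses, no induction, and no extra machinery beyond the truth lemma is needed.
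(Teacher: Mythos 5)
Your proof is correct and matches the paper's intent exactly: the paper states the corollary as an immediate consequence of the truth lemma, and your chain $\phi\in u\iff u\Vdash\phi\iff v\Vdash\phi\iff\phi\in v$ followed by set extensionality is precisely that argument.
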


First, we shall verify, that $\mathcal{M}$ is saturated: Given $u\in M$
and $\Sigma$ a set of formulas such that for every finite subset
$\Sigma_{0}\subseteq\Sigma$ there is some $v_{0}$ such that $u\rightarrowtriangle v_{0}$
and $v_{0}\Vdash\bigwedge\Sigma_{0}$. It follows that every finite
subset of the set 
\[
S\,:=\,\{\phi\mid\square\phi\in u\}\,\cup\,\Sigma
\]
is satisfied in some $v_{0}$, and hence consistent. Hence the whole
set $S$ itself is consistent. Let $v$ be any maximal consistent
set containing $S$, then $v\in M$ and clearly $u\rightarrowtriangle v$
as well as $v\Vdash\sigma$ for each $\sigma\in\Sigma.$ Therefore:
\begin{lemma}
\textup{$\mathcal{M}$ is saturated.}
\end{lemma}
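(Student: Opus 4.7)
The plan is to take an arbitrary $u\in M$ and a set $\Sigma\subseteq\mathcal{L}_{\Phi}$ with the assumed finite satisfiability property, and to \emph{construct} a successor $v$ of $u$ that satisfies every formula in $\Sigma$. The natural candidate is a maximal consistent extension of the set
\[
S \;:=\; \{\phi\mid\square\phi\in u\}\,\cup\,\Sigma,
\]
because membership of $\{\phi\mid\square\phi\in u\}$ in $v$ is exactly the definition (\ref{eq:def:u --> v}) of $u\rightarrowtriangle_{\mathcal{M}} v$, and membership of $\Sigma$ in $v$ gives $v\Vdash\sigma$ for each $\sigma\in\Sigma$ by the truth lemma.

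The main step, and the place where the saturation hypothesis on $\Sigma$ is used, is to prove that $S$ is consistent. Invoking the finite-character criterion recalled just before the lemma, it suffices to show every finite $S_0\subseteq S$ is consistent. Any such $S_0$ has the form $\{\phi_1,\dots,\phi_n\}\cup\Sigma_0$ with $\square\phi_k\in u$ and $\Sigma_0$ a finite subset of $\Sigma$. By hypothesis there is a successor $v_0$ of $u$ with $v_0\Vdash\bigwedge\Sigma_0$; the relation $u\rightarrowtriangle_{\mathcal{M}} v_0$ together with $\square\phi_k\in u$ forces $\phi_k\in v_0$, and the truth lemma gives $\sigma\in v_0$ for $\sigma\in\Sigma_0$. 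Hence $S_0\subseteq v_0$, and since $v_0$ is consistent, so is $S_0$.

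Once consistency of $S$ is in hand, Lindenbaum's lemma (the extension of any consistent set to a maximally consistent one, also recalled in the preamble of this section) produces $v\in M$ with $S\subseteq v$. Then $\square\phi\in u$ implies $\phi\in S\subseteq v$, so $u\rightarrowtriangle_{\mathcal{M}} v$ by (\ref{eq:def:u --> v}); and for every $\sigma\in\Sigma$ we have $\sigma\in v$, which by the truth lemma means $v\Vdash\sigma$. Thus $v$ is a successor of $u$ satisfying all of $\Sigma$, verifying saturation.

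The only real obstacle is the finite-consistency step, and there the subtle point is to translate the purely semantic hypothesis (finite subsets of $\Sigma$ satisfied at \emph{some} successor of $u$) into a syntactic consistency statement about $S$, which in turn requires the $\square$-reduct of $u$ to be transported along the relation $\rightarrowtriangle_{\mathcal{M}}$. This is exactly what clause (\ref{eq:def:u --> v}) is designed for, so the argument is short once the definitions are unpacked.
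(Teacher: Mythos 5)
Your proposal is correct and follows essentially the same route as the paper: form $S=\{\phi\mid\square\phi\in u\}\cup\Sigma$, establish consistency via finite subsets using the hypothesized successors $v_{0}$, extend to a maximal consistent $v$ by Lindenbaum, and read off $u\rightarrowtriangle_{\mathcal{M}}v$ and $v\Vdash\sigma$ via the truth lemma. The only (harmless) difference is that you argue the finite step syntactically ($S_{0}\subseteq v_{0}$, hence consistent) where the paper simply notes each finite subset is satisfied at some $v_{0}$ and hence consistent.
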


Let us see that moreover:
\begin{theorem}
\textup{$\mathcal{M}$ is the terminal object in the category of all
saturated Kripke structures.}
\end{theorem}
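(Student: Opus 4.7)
The plan is to exhibit for every saturated Kripke structure $\mathcal{X}=(X,R,v)$ a unique homomorphism into $\mathcal{M}$, using the obvious ``theory map''. Define $\varphi\colon X\to M$ by $\varphi(x):=\llbracket x\rrbracket=\{\phi\in\mathcal{L}_{\Phi}\mid x\Vdash\phi\}$. Well-definedness of $\varphi$ is the standard observation that $\llbracket x\rrbracket$ is maximally consistent, and preservation of valuations is immediate since $v_{\mathcal{M}}(\varphi(x))=\llbracket x\rrbracket\cap\Phi=\{p\in\Phi\mid x\Vdash p\}=v(x)$. Uniqueness comes for free at the end: any homomorphism $\psi\colon\mathcal{X}\to\mathcal{M}$ preserves logical equivalence by Lemma~\ref{lem:Bisimilar_implies_logical_equivalence}, so by the truth lemma $\phi\in\psi(x)\iff \psi(x)\Vdash\phi\iff x\Vdash\phi$, forcing $\psi(x)=\varphi(x)$. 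Corollary~\ref{cor:u approx v implies u=00003Dv-1} can also be invoked to recover uniqueness directly.

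It remains to verify that $\varphi$ is a Kripke homomorphism, i.e.\ that the graph of $\varphi$ satisfies the forth and back conditions of Definition~\ref{def:bisimulation}. The forth condition is easy and does not use saturation: if $x\rightarrowtriangle y$ and $\square\phi\in\varphi(x)$, then $x\Vdash\square\phi$, hence $y\Vdash\phi$, hence $\phi\in\varphi(y)$; by the definition (\ref{eq:def:u --> v}) this gives $\varphi(x)\rightarrowtriangle_{\mathcal{M}}\varphi(y)$.

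The main obstacle, as expected, is the back condition, and this is precisely where saturation of $\mathcal{X}$ is needed. Suppose $\varphi(x)\rightarrowtriangle_{\mathcal{M}}w$ for some $w\in M$; we must produce $y\in X$ with $x\rightarrowtriangle y$ and $\varphi(y)=w$. The key step is to show that every finite subset $\{\phi_{1},\dots,\phi_{n}\}\subseteq w$ is satisfied at some $R$-successor of $x$. Indeed, if no successor of $x$ satisfied $\phi_{1}\wedge\dots\wedge\phi_{n}$, then $x\Vdash\square\neg(\phi_{1}\wedge\dots\wedge\phi_{n})$, so $\square\neg(\phi_{1}\wedge\dots\wedge\phi_{n})\in\varphi(x)$; the canonical relation then forces $\neg(\phi_{1}\wedge\dots\wedge\phi_{n})\in w$, contradicting maximal consistency of $w$. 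With this finite satisfiability in hand, the saturation of $\mathcal{X}$ produces a single successor $y$ of $x$ with $y\Vdash w$, i.e.\ $w\subseteq\llbracket y\rrbracket=\varphi(y)$. Since both $w$ and $\varphi(y)$ are maximally consistent, $w=\varphi(y)$, completing the back condition and the proof.
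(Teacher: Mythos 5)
Your proof is correct and follows the paper's overall scheme: the same theory map $x\mapsto\llbracket x\rrbracket$, the same treatment of valuations and of the forth condition via the truth lemma, and the same uniqueness argument through Corollary \ref{cor:u approx v implies u=00003Dv-1}. Where you genuinely differ is in the back condition, the only place where saturation enters. The paper assumes for contradiction that no successor $y_i$ of $x$ has theory $v$, chooses separating formulas $\phi_i$ with $y_i\Vdash\phi_i$ and $\phi_i\notin v$, notes $x\Vdash\square\bigvee_{i\in I}\phi_i$, and applies the reformulated version of saturation (extraction of a finite subfamily under $\square\bigvee$) to contradict $\llbracket x\rrbracket\rightarrowtriangle_{\mathcal{M}}v$. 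You instead show that every finite subset of $w$ is satisfied at some successor of $x$ (otherwise $\square\neg(\phi_{1}\wedge\dots\wedge\phi_{n})\in\varphi(x)$ would force $\neg(\phi_{1}\wedge\dots\wedge\phi_{n})\in w$, contradicting consistency), and then apply the original definition of $m$-saturation with $\Sigma=w$ to obtain one successor $y$ with $w\subseteq\llbracket y\rrbracket$, upgraded to $w=\varphi(y)$ by maximality. These are dual uses of the same hypothesis: your route mirrors the paper's own earlier argument that $\mathcal{M}$ is saturated (via the set $\{\phi\mid\square\phi\in u\}\cup\Sigma$) and avoids the informal infinitary disjunction notation, while the paper's contrapositive route handles all successors at once. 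One small point worth making explicit: invoking saturation with $\Sigma=w$ presupposes that $x$ has at least one successor (so that the empty finite subset is realized), but this already follows from your finite-satisfiability claim applied to any single formula of $w$, e.g.\ $\top$.
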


\begin{proof}
First note that Corollary \ref{cor:u approx v implies u=00003Dv-1}
yields uniqueness: If for any Kripke structure $\mathcal{X}=(X,R,v)$
we had different homomorphisms $\varphi_{1},\varphi_{2}:\mathcal{X}\to\mathcal{M}$,
then for some $x\in X$ we would have $\varphi_{1}(x)\ne\varphi_{2}(x)$.
However, $x\approx\varphi_{1}(x)$ as well as $x\approx\varphi_{2}(x)$
according to \ref{eq:x approx phi(x)}, whence $\varphi_{1}(x)\approx\varphi_{2}(x)$,
which contradicts Corollary \ref{cor:u approx v implies u=00003Dv-1}.

For any Kripke structure $\mathcal{X}=(X,R,v)$ we show that the map
$\left\llbracket -\right\rrbracket :X\to M$ which sends an element
$x\in X$ to $\left\llbracket x\right\rrbracket :=\{\phi\mid x\Vdash\phi\}$
is a homomorphism, see definition \ref{def:Homomorphism}:

First, for each $p\in\Phi$ we have: $x\Vdash p$ in $\mathcal{X}$
implies $p\in\llbracket x\rrbracket$, so $\llbracket x\rrbracket\Vdash p$
in $\mathcal{M}$, by the \emph{Truth Lemma}.

Next, suppose $x,y\in\mathcal{X}$ and $x\rightarrowtriangle_{\mathcal{X}}y$.
Then for each $\phi\in\mathcal{L}_{\Phi}$ with $x\Vdash\square\phi$
it follows $y\Vdash\phi$, which by the truth lemma says $\square\phi\in\llbracket x\rrbracket\implies\phi\in\llbracket y\rrbracket$,
hence $\llbracket x\rrbracket\rightarrowtriangle_{\mathcal{M}}\llbracket y\rrbracket$
by \ref{eq:def:u --> v}.

Finally, let us assume $\llbracket x\rrbracket\rightarrowtriangle_{\mathcal{M}}v$
for some maximally consistent set $v.$ We need to find some $y\in\mathcal{X}$
with $x\rightarrowtriangle_{\mathcal{X}}y$ and $\llbracket y\rrbracket=v.$

\[
\xymatrix{x\ar@{|->}[rr]\ar@{-|>}[d] &  & \llbracket x\rrbracket\ar@{-|>}[d]\\
y_{i}\ar@{}[r]|\Vdash & \phi_{i}\ar@{}[r]|\notin & v
}
\]

For this we invoke a Hennessy\textendash Milner style argument again:
Let $(y_{i})_{i\in I}$ be the collection of all successors of $x.$
If $\llbracket y_{i}\rrbracket=v$ for some $i$, then we are done.
Otherwise, assume that $\llbracket y_{i}\rrbracket\ne v$ for each
$i\in I,$ then there are formulae $\phi_{i}$ with $\phi_{i}\in\llbracket y_{i}\rrbracket$
but $\phi_{i}\notin v$, or, in other words, $y_{i}\Vdash\phi_{i}$,
but $v\not\Vdash\phi_{i}$.

Hence $x\models\square\bigvee_{i\in I}\phi_{i}$. By assumption $\mathcal{X}$
is saturated, so $x\in X$ is saturated, which means that we can find
a finite subset $I_{0}\subseteq I$ with $x\Vdash\square\bigvee_{i\in I_{0}}\phi_{i}$.
This is now an honest formula, so from $\llbracket x\rrbracket\rightarrowtriangle v$,
and definition \ref{eq:def:u --> v} we conclude $v\models\bigvee_{i\in I_{0}}\phi_{i}$.
This means that $v\Vdash\phi_{i}$ for some $i\in I_{0}$, contradicting
our assumption.

Theorem \ref{thm:Compact Structures} tells us explicitly, how to
obtain the terminal Vietoris coalgebra, so we have:
\end{proof}

\begin{theorem}
The category of all Vietoris coalgebras has a terminal object. Its
base structure is the canonical model, consisting of all maximally
consistent sets of $\mathcal{L}_{\Phi}$-formulas, and its topology
is generated by the open sets $\{u\in\mathcal{M}\mid\phi\in u\}$
for all $\phi\in\mathcal{L}_{\Phi}.$
\end{theorem}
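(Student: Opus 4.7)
The plan is to transport the terminality of the canonical model $\mathcal{M}$ from the category of saturated Kripke structures (already established in the preceding theorem) to the category of Vietoris coalgebras, using the isomorphism of categories provided by the Corollary following Theorem \ref{thm:Compact Structures}. Concretely, the functor $F$ implicit in the proof of Theorem \ref{thm:Compact Structures}, direction $(1)\to(2)$, equips a saturated Kripke structure with the topology generated by its validity sets $\llbracket\phi\rrbracket$, turning it into a topological model and hence a Vietoris coalgebra. Since categorical isomorphisms preserve terminal objects, $F(\mathcal{M})$ will be the desired terminal object in $Coalg_{\mathbb{V}(-)\times\mathbb{P}(\Phi)}$.

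The second step is to identify the topology of $F(\mathcal{M})$ explicitly. By construction it is generated by the validity sets $\llbracket\phi\rrbracket=\{u\in M\mid u\Vdash\phi\}$ for $\phi\in\mathcal{L}_{\Phi}$. Applying the truth lemma ($u\Vdash\phi \iff \phi\in u$) rewrites each such generator as $\{u\in\mathcal{M}\mid\phi\in u\}$, matching the description in the statement.

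There is no genuine obstacle here: every ingredient has already been assembled. The only detail worth double-checking is that the universal homomorphism $\llbracket-\rrbracket:\mathcal{X}\to\mathcal{M}$ constructed in the proof of the preceding theorem is automatically continuous when $\mathcal{X}$ is viewed as a topological model via $F$ and $\mathcal{M}$ carries the topology just described. This is immediate: the preimage of a subbasic open set $\{u\in\mathcal{M}\mid\phi\in u\}$ under $\llbracket-\rrbracket$ is precisely $\llbracket\phi\rrbracket$ in $\mathcal{X}$, which is open by choice of topology on $F(\mathcal{X})$. Uniqueness in the topological category is inherited from uniqueness in the Kripke category, since any two topological coalgebra morphisms into $F(\mathcal{M})$ are, after forgetting the topology, two Kripke homomorphisms into $\mathcal{M}$ and hence must coincide by the previous theorem.
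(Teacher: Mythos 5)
Your proposal is correct and follows essentially the same route as the paper: the paper likewise transfers terminality of the canonical model $\mathcal{M}$ from saturated Kripke structures to Vietoris coalgebras via Theorem \ref{thm:Compact Structures} and the isomorphism of categories, with the truth lemma identifying the generating opens $\llbracket\phi\rrbracket$ with $\{u\in\mathcal{M}\mid\phi\in u\}$. Your extra check of continuity and uniqueness of $\llbracket-\rrbracket$ only makes explicit what the paper leaves implicit in invoking that isomorphism.
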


\section{Conclusion}

Starting from an arbitrary set $\Phi$ of atomic proposition, we have
characterized modally saturated Kripke structures as \emph{Top}-coalgebras
for $\mathbb{V}(-)\times\mathbb{P}(\Phi)$, which is the compact Vietoris
functor on the category Top of topological spaces and continuous mappings,
augmented with a constant part, representing sets of atomic propositions. 

In fact, the categories of saturated Kripke structures and the category
of all Vietoris coalgebras over the category $Top$ are isomorphic.
We have described the relation of convergence in the Vietoris space
$\mathbb{V}(\mathcal{X})$ to convergence in the base space $\mathcal{X},$
from which it was easy to derive that the Kripke-closure of bisimulations
and of subcoalgebras are again bisimulations, resp. subcoalgebras.
Finally, we have shown that the final Vietoris coalgebra exists, and
is derived from the canonical Kripke model.


\begin{thebibliography}{10}
\bibitem{BalanKurz} Adriana Balan and Alexander Kurz. \newblock Finitary functors: From set to preord and poset. \newblock In Andrea Corradini, Bartek Klin, and Corina C{\^i}rstea, editors,   {\em Algebra and Coalgebra in Computer Science}, pages 85--99, Berlin,   Heidelberg, 2011. Springer Berlin Heidelberg.
\bibitem{Bar93} M.~Barr. \newblock Terminal coalgebras in well-founded set theory. \newblock {\em Theoretical Computer Science}, (114(2)):299--315, 1993.
\bibitem{Bar94} M.~Barr. \newblock Additions and corrections to `{T}erminal coalgebras in well-founded   set theory'. \newblock {\em Theoretical Computer Science}, (124(1)):189--192, 1994.
\bibitem{BezhFonVen10} N.~Bezhanishvili, G.~Fontaine, and Y.~Venema. \newblock Vietoris bisimulations. \newblock {\em Journal of Logic and Computation}, 20(5), 2010.
\bibitem{blackburn_rijke_venema_2001} Patrick Blackburn, Maarten~de Rijke, and Yde Venema. \newblock {\em Modal Logic}. \newblock Cambridge Tracts in Theoretical Computer Science. Cambridge   University Press, 2001.
\bibitem{Doberkat} Ernst-Erich Doberkat. \newblock {\em Stochastic Coalgebraic Logic}. \newblock Monographs in Theoretical Computer Science. Springer Berlin   Heidelberg, 2009.
\bibitem{Fine} K.~Fine. \newblock Some connections between elementary and modal logic. \newblock In Patrick Blackburn, Johan Van~Benthem, and Frank Wolter, editors,   {\em Proceedings of the Third Scandinavian Logic Symposium}, volume~3, pages   15--31. North-Holland, 1973.
\bibitem{GorankoOtto} Valentin Goranko and Martin Otto. \newblock Model theory of modal logic. \newblock In Patrick Blackburn, Johan Van~Benthem, and Frank Wolter, editors,   {\em Handbook of Modal Logic}, volume~3, pages 249--329. Elsevier B.V., 2007.
\bibitem{Gum99b} H.~Peter Gumm. \newblock Elements of the general theory of coalgebras. \newblock In {\em LUATCS 99}. Rand Afrikaans University, Johannesburg, South   Africa, 1999.
\bibitem{Ihr03} H.~Peter Gumm. \newblock Anhang {\"u}ber universelle coalgebra. \newblock In Thomas Ihringer, editor, {\em Allgemeine Algebra}, volume~10 of   {\em Berliner Studienreihe zur Mathematik}. Heldermann Verlag, 2003.
\bibitem{GS2002} H.~Peter Gumm and Tobias Schr{\"o}der. \newblock Coalgebras of bounded type. \newblock {\em Mathematical Structures in Computer Science}, 12(5):565--578,   2002.
\bibitem{HM} Matthew Hennessy and Robin Milner. \newblock On observing nondeterminism and concurrency. \newblock In J.~de~Bakker and J.~van Leeuwen, editors, {\em Automata, Languages   and Programming. ICALP 1980}, volume~85 of {\em Lecture Notes in Computer   Science}, pages 159--173. Springer Verlag, Berlin, 1980.
\bibitem{HofmannNevesNora} Dirk Hofmann, Renato Neves, and Pedro Nora. \newblock Limits in categories of vietoris coalgebras. \newblock {\em Math. Struct. in Comp. Science}, pages 552--587, 2019.
\bibitem{Hollenberg} Marco Hollenberg. \newblock Hennessy-milner classes and process algebra. \newblock In A.~Ponse, de~Rijke~M., and Venema Y., editors, {\em Modal Logic   and Process Algebra}, volume~53 of {\em CSLI Lecture Notes}, pages 107--129.   CSLI Publications, 1995.
\bibitem{jacobs_2016} Bart Jacobs. \newblock {\em Introduction to Coalgebra: Towards Mathematics of States and   Observation}. \newblock Cambridge Tracts in Theoretical Computer Science. Cambridge   University Press, 2016.
\bibitem{KupkeKurzVenema} Clemens Kupke, Alexander Kurz, and Yde Venema. \newblock Stone coalgebras. \newblock {\em Theoretical Computer Science}, 327:109--134, 2004.
\bibitem{Moss2006} Lawrence~S. Moss and Ignacio~D. Viglizzo. \newblock Final coalgebras for functors on measurable spaces. \newblock {\em Information and Computation}, 204(4):610 -- 636, 2006. \newblock Seventh Workshop on Coalgebraic Methods in Computer Science 2004.
\bibitem{Munkres} James Munkres. \newblock {\em Topology}. \newblock Prentice Hall, 2000.
\bibitem{Rut96} J.J.M.M. Rutten. \newblock Universal coalgebra: a theory of systems. \newblock Technical report, CWI, Amsterdam, 1996.
\bibitem{Rutten2000} J.J.M.M. Rutten. \newblock Universal coalgebra: a theory of systems. \newblock {\em Theoretical Computer Science}, (249):3--80, 2000.
\bibitem{Viet1922} Leopold Vietoris. \newblock Bereiche zweiter {O}rdnung. \newblock {\em Monatsh. Math. Phys.}, 32(1), 1922.
\end{thebibliography}
\end{document}